\documentclass[11pt,a4paper]{article}

\usepackage{a4wide}
\usepackage{amsmath}
\usepackage{amssymb}
\usepackage{mathtools}
\usepackage{bm}
\mathtoolsset{showonlyrefs=true}
\usepackage{booktabs}
\usepackage{algorithm}
\usepackage{algpseudocode}
\usepackage[utf8]{inputenc}
\usepackage[T1]{fontenc}
\usepackage[numbers,sort&compress]{natbib}
\usepackage{hyperref}
\usepackage{amsfonts}
\usepackage{graphicx}
\usepackage{subcaption}
\usepackage{cleveref}
\usepackage{amsthm}
\usepackage{url}
\usepackage{mathrsfs}
\usepackage{nicefrac}
\usepackage{microtype}
\usepackage{fancyhdr}

\DeclareMathOperator{\diag}{diag}

\DeclareMathOperator{\grad}{grad}

\DeclareMathOperator{\Hess}{Hess}
\DeclareMathOperator{\inv}{inv}

\DeclareMathOperator{\Rank}{rank}
\DeclareMathOperator{\sgn}{sgn}

\let\skew\relax
\DeclareMathOperator{\skew}{skew}
\DeclareMathOperator{\Skew}{Skew}
\DeclareMathOperator{\Span}{span}

\DeclareMathOperator{\St}{St}
\DeclareMathOperator{\iSt}{iSt}

\DeclareMathOperator{\sym}{sym}
\DeclareMathOperator{\Sym}{Sym}
\DeclareMathOperator{\tr}{tr}

\def\D{\mathrm{D}}
\def\id{\mathrm{id}}

\def\E{\mathcal{E}}
\def\R{\mathbb{R}}
\def\M{\mathcal{M}}
\def\N{\mathcal{N}}
\def\T{\mathscr{T}}

\newtheorem{proposition}{Proposition}[section]

\newtheorem{lemma}{Lemma}[section]

\theoremstyle{remark}

\newtheorem{remark}{Remark}[section]

\crefname{proposition}{Proposition}{Propositions}
\crefname{theorem}{Theorem}{Theorems}
\crefname{corollary}{Corollary}{Corollaries}
\crefname{lemma}{Lemma}{Lemmas}
\crefname{definition}{Definition}{Definitions}
\crefname{remark}{Remark}{Remarks}
\crefname{assumption}{Assumption}{Assumptions}
\crefname{example}{Example}{Examples}
\crefname{problem}{Problem}{Problems}
\crefname{section}{Section}{Sections}
\crefname{subsection}{Section}{Sections}
\crefname{subsubsection}{Section}{Sections}
\crefname{figure}{Figure}{Figures}

\newenvironment{keywords}{\par\noindent\textbf{Keywords:}\ }{\par}
\newcommand{\sep}{\unskip,\ }

\title{Second-order geometry and Riemannian Newton-type methods\\
for optimization on the indefinite Stiefel manifold}
\author{Hiroyuki Sato\thanks{Department of Mathematical Sciences, Ritsumeikan University, 1-1-1 Noji-higashi, Kusatsu-shi, Shiga 525-8577, Japan. Corresponding author: \texttt{hsato@fc.ritsumei.ac.jp}. ORCID: \href{https://orcid.org/0000-0003-1399-8140}{0000-0003-1399-8140}.}}
\date{}

\begin{document}
\maketitle

\begin{abstract}
This paper investigates the second-order geometry of the indefinite Stiefel manifold and derives explicit formulas for the Levi-Civita connection and the Riemannian Hessian under two generalized canonical metrics. We discuss Riemannian Newton's method, in which Newton's equation is solved by the linear conjugate gradient method in a fixed tangent space, and the Riemannian trust-region method with the truncated conjugate gradient method. Numerical experiments for trace minimization problems demonstrate the robustness of the trust-region method over several problem sizes and in near-singular settings where eigenvalues of the constraint matrix approach zero.
\end{abstract}

\begin{keywords}
indefinite Stiefel manifold \sep Riemannian optimization \sep Levi-Civita connection \sep Riemannian Hessian \sep Riemannian Newton's method \sep Riemannian trust-region method\end{keywords}

\section{Introduction}
Riemannian optimization, which is a generalization of continuous optimization in the Euclidean space to Riemannian manifolds, has been extensively studied~\cite{AbsMahSep2008,boumal2023intromanifolds,edelman1998geometry,sato2021riemannian,smith1994optimization}.
An important example of manifolds on which we study optimization is the Stiefel manifold
\begin{equation}
\St(p,n) \coloneqq \{X \in \R^{n \times p} \mid X^{\top}X = I_p\},
\end{equation}
where $p, n \in \mathbb{N}$ satisfy $p \leq n$.
Each point $X = \begin{bmatrix}
x_1 & x_2 & \cdots &x_p
\end{bmatrix} \in \St(p,n)$ ($x_1, x_2, \dots, x_p \in \R^n$) can be regarded as an orthonormal $p$-frame in $\R^n$ with respect to the standard inner product $\R^n \times \R^n \to \R \colon (a, b) \mapsto \langle a, b\rangle \coloneqq a^{\top}b$ since we have $\langle x_i, x_j\rangle = x_i^{\top}x_j = \delta_{ij}$ from $X^{\top}X = I_p$, where $\delta_{ij}$ is Kronecker's delta.
If we endow $\R^n$ with a general inner product $\langle a, b\rangle_{B} \coloneqq a^{\top}Bb$, where $B$ is an $n \times n$ symmetric positive definite matrix, then the set of orthonormal $p$-frames in $\R^n$ with respect to this inner product constitutes the generalized Stiefel manifold~\cite{sato2019cholesky,yger2012adaptive}
\begin{equation}
\St_{B}(p,n) \coloneqq \{X \in \R^{n \times p} \mid X^{\top}BX = I_p\}.
\end{equation}

We can also endow $\R^n$ with an indefinite inner product $\langle a, b\rangle_A \coloneqq a^{\top}Ab$, where $A$ is an invertible and indefinite symmetric matrix, i.e., $A$ has both positive and negative eigenvalues.
The indefinite inner product is symmetric and nondegenerate, but does not possess the property of positive-definiteness.
We say that $a \in \R^n$ is a unit vector with respect to the indefinite inner product if $\langle a, a\rangle_A = \pm 1$ and that $a$ and $b \in \R^n$ are orthogonal if $\langle a, b\rangle_A = 0$.
Then, for a $p$-frame $X = \begin{bmatrix}
x_1 & x_2 & \cdots & x_p
\end{bmatrix}$, where each $x_i$ is a unit vector and $x_i$ and $x_j$ ($i \neq j$) are orthogonal, we have $\langle x_i, x_j\rangle_A = \pm \delta_{ij}$.
Here, the number of $i \in \{1, 2, \dots, p\}$ satisfying $\langle x_i, x_i\rangle_A = 1$ (resp. $-1$) is not greater than that of positive (resp. negative) eigenvalues of $A$.
Among such $X$, we consider the case $\langle x_i, x_i\rangle_A = 1$ for $1 \leq i \leq p_+$ and $\langle x_i, x_i\rangle_A = -1$ for $p_+ + 1 \leq i \leq p_+ + p_-$, where nonnegative integers $p_+$ and $p_-$ satisfy $p_+ + p_- = p$.
Then, we have $X^{\top}A X = \diag(I_{p_+}, -I_{p_-})$, where we note that the square of the right-hand side is equal to the identity matrix $I_p$.

Generalizing all the above cases, the indefinite Stiefel manifold~\cite{van2025generalized,van2024riemannian} is defined to be
\begin{equation}
\iSt_{A,J}(p,n) \coloneqq \{X \in \R^{n \times p} \mid X^{\top}AX = J\},
\end{equation}
where $A$ is an $n \times n$ invertible symmetric matrix and $J$ is a $p \times p$ symmetric matrix with $J^2 = I_p$, which means that each eigenvalue of $J$ is $1$ or $-1$.
For generality, despite the word ``indefinite,'' we allow $A$ to be either definite or indefinite.
Therefore, the Stiefel and generalized Stiefel manifolds are special cases of the indefinite Stiefel manifold as $\iSt_{I_n, I_p}(p,n) = \St(p,n)$ and $\iSt_{B, I_p}(p,n) = \St_B(p,n)$.
The example shown in the previous paragraph is $\iSt_{A, J}(p,n)$ with $J = \diag(I_{p_+}, -I_{p_-})$.

The steepest descent method on $\iSt_{A,J}(p,n)$ is proposed in~\cite{van2024riemannian}.
A subsequent study~\cite{van2025generalized} showed that suitably chosen generalized canonical metrics substantially reduce the computational burden.
While such studies have revealed the geometry of the indefinite Stiefel manifold to a certain extent, the second-order geometry of the manifold, which is essential for Newton's and trust-region methods, has not been studied, to the best of the author's knowledge.

This study first investigates the second-order geometry of the indefinite Stiefel manifold $\iSt_{A,J}(p,n)$.
Specifically, given the Riemannian metrics in~\cite{van2025generalized}, we derive a formula for the associated Levi-Civita connection by means of Koszul's formula.
Then, we can compute the Hessian of an objective function on $\iSt_{A,J}(p,n)$ by using the Levi-Civita connection.

As for applications, optimization with orthogonality constraints is important in various areas such as signal processing. Examples include principal component analysis and joint approximate diagonalization arising in independent component analysis.
These problems naturally lead to optimization over the Stiefel manifold~\cite{edelman1998geometry,sato2017riemannian}.
Regarding a more general (positive-definite) inner product in the Euclidean space, the canonical correlation analysis is formulated as an optimization problem on the product of two generalized Stiefel manifolds~\cite{sato2019cholesky,yger2012adaptive}.
As a more generalized version, the symplectic Stiefel manifold is also studied~\cite{gao2021riemannian,yamada2023conjugate}.
Beyond positive-definite inner products, optimization problems with quadratic constraints of the form $X^{\top}AX=J$, which are the main focus of this paper, arise in several data-analytic and signal-processing–related tasks, including symmetric generalized eigenvalue problems involving an indefinite constraint matrix.
This motivates studying optimization problems whose feasible set consists of frames that are orthonormal with respect to an indefinite inner product, leading to the indefinite Stiefel manifold.
Further details and additional examples of the indefinite Stiefel manifold can be found in~\cite{van2024riemannian,van2025generalized}.

The remainder of this paper is organized as follows.
In Section~\ref{sec:Notation}, we introduce our notation and state preliminary assumptions.
In Section~\ref{sec:Review}, we review the indefinite Stiefel manifold $\iSt_{A,J}(p,n)$ and first-order geometry on $\iSt_{A,J}(p,n)$, mainly following the papers~\cite{van2024riemannian,van2025generalized}.
We also prepare and prove some propositions used in later sections.
In Section~\ref{sec:second-order}, we derive the Levi-Civita connection and Hessian of a function on $\iSt_{A,J}(p,n)$.
To this end, we first derive the Levi-Civita connection on an ambient manifold, which is an open submanifold of $\R^{n \times p}$, by using Koszul's formula.
Section~\ref{sec:Newton} explains how the derived Hessian can be used in Newton-type methods.
Specifically, we discuss Riemannian Newton's method and trust-region method.
In Section~\ref{sec:NumericalExperiments}, we present numerical results for trace minimization problems on the indefinite Stiefel manifold.
Section~\ref{sec:conclusion} concludes the paper.

\section{Preliminaries}
\label{sec:Notation}
Let $\Sym(n)$ (resp. $\Sym_{++}(n)$) with $n \in \mathbb{N}$ denote the set of all $n \times n$ real symmetric (resp. symmetric positive-definite) matrices.
For a square matrix $S$, $\sym(S) \coloneqq (S+S^{\top})/2$ and $\skew(S) \coloneqq (S-S^{\top})/2$ denote the symmetric and skew-symmetric parts of $S$, respectively.

Throughout the paper, we fix $p, n \in \mathbb{N}$ satisfying $p \leq n$, invertible symmetric matrix $A \in \Sym(n)$, and $J \in \Sym(p)$ satisfying $J^2 = I_p$.
We assume that $J$ has $p_+$ positive and $p_-$ negative eigenvalues, where $p_+$ and $p_-$ are nonnegative integers not greater than the numbers of positive and negative eigenvalues of $A$, respectively.

Let $\E$ denote the set of all $n \times p$ full-rank (i.e., of rank $p$) matrices $X$ such that $X^{\top}AX$ is invertible.
Equivalently, $\E$ is the intersection of two open sets as
\begin{equation}
\label{eq:openE}
\E = \{X \in \R^{n \times p} \mid \det(X^{\top}X) \neq 0\} \cap \{X \in \R^{n \times p} \mid \det(X^{\top}AX) \neq 0\}.
\end{equation}
Therefore, $\E$ is an open set, and thus an open submanifold of $\R^{n \times p}$~\cite{lee2018introduction}.
{
In the next section, we regard the manifold $\E$ as an ambient space of the indefinite Stiefel manifold $\iSt_{A,J}(p,n)$.
In particular, we equip $\E$ with a Riemannian metric (see below for the definition) and $\iSt_{A,J}(p,n)$ with the induced metric, thereby regarding $\iSt_{A,J}(p,n)$ as a Riemannian submanifold of $\E$.
}

For a manifold $\M$ and a point $x \in \M$, the tangent space of $\M$ at $x$ is denoted by $T_x \M$.
The tangent bundle of $\M$ is denoted by $T\M$, which is the direct sum of the tangent spaces at all points on $\M$.
The derivative of a smooth (i.e., smooth at any point as a map between Euclidean spaces via local coordinates) map $\varphi\colon \M \to \N$ between two manifolds $\M$ and $\N$ is denoted by $\D \varphi(x)$, which is a map from $T_x \M$ to $T_{\varphi(x)}\N$ defined to satisfy
\begin{equation}
\D \varphi(x)[\xi] = \frac{d}{dt}\varphi(c(t))\bigg|_{t = 0}
\end{equation}
for any curve $c$ on $\M$ with $c(0) = x$ and $\dot{c}(0) = \xi \in T_x \M$.
A smooth vector field $U$ on $\M$ is a smooth map from $\M$ to $T\M$ that associates each point $x \in \M$ a tangent vector $U(x) \in T_x \M$.
We denote the sets of all smooth functions and all smooth vector fields on $\M$ by $\mathfrak{F}(\M)$ and $\mathfrak{X}(\M)$, respectively.
A Riemannian metric $\langle \cdot, \cdot\rangle \colon \M \ni x \mapsto \langle \cdot, \cdot\rangle_x$ on $\M$ is a family of inner products in tangent spaces of $\M$ that is smooth with respect to $x$, i.e., $\langle \cdot, \cdot\rangle_x$ is an inner product in the vector space $T_x \M$ and $x \mapsto \langle U(x), V(x)\rangle_x$ is a smooth function on $\M$ for any vector fields $U, V \in \mathfrak{X}(\M)$.
{A manifold endowed with a Riemannian metric is called a Riemannian manifold. For a smooth function $f \colon \M \to \R$, the Riemannian gradient $\grad f(x)$ at $x \in \M$ is defined to satisfy
\begin{equation}
\D f(x)[\xi] = \langle \grad f(x), \xi\rangle_x
\end{equation}
for any tangent vector $\xi \in T_x \M$~\cite{AbsMahSep2008,boumal2023intromanifolds}.
}

When we deal with second-order optimization methods on a Riemannian manifold $\M$, we need the Hessian of the smooth objective function $f \colon \M \to \R$.
To define the Hessian, the concept of the Levi-Civita connection, which is a special case of affine connections, is used.
A map $\nabla \colon \mathfrak{X}(\M) \times \mathfrak{X}(\M) \to \mathfrak{X}(\M) \colon (U, V) \mapsto \nabla_U V$ is called an affine connection if it satisfies, for any $U, V, W \in \mathfrak{X}(\M)$, $f, g \in \mathfrak{F}(\M)$, and $\alpha, \beta \in \R$,
1) $\nabla_{fU + gW} V = f\nabla_U V + g\nabla_W V$; 2) $\nabla_U(\alpha V + \beta W) = \alpha\nabla_U V + \beta\nabla_U W$; and 3) $\nabla_U (fV) = (Uf)V + f\nabla_U V$.
An affine connection $\nabla$ is called the Levi-Civita connection if it further satisfies 4) $[U, V] = \nabla_U V - \nabla_V U$ and 5) $U\langle V, W\rangle = \langle \nabla_U V, W\rangle + \langle V, \nabla_U W\rangle$.
Here, $fU \in \mathfrak{X}(\M)$, $Uf \in \mathfrak{F}(\M)$, $[U, V] \colon \mathfrak{F}(\M) \to \mathfrak{F}(\M)$, and $\langle V, W\rangle \in \mathfrak{F}(\M)$ are defined by
$(fU)(x) \coloneqq f(x)U(x)$, $(Uf)(x) \coloneqq \D f(x)[U(x)]$, $[U, V](f) \coloneqq U(Vf) - V(Uf)$, and $\langle V, W\rangle(x) \coloneqq \langle V(x), W(x)\rangle_x$, respectively.
Since $(\nabla_U V)(x)$ depends on $U$ only through the value $U(x) \eqqcolon \xi$ at $x$, we sometimes simply denote $(\nabla_U V)(x)$ as $\nabla_{\xi}V$.
Then, the Hessian $\Hess f(x) \colon T_x \M \to T_x \M$ of $f \in \mathfrak{F}(\M)$ at $x \in \M$ is defined to satisfy
\begin{equation}
\Hess f(x)[\xi] = \nabla_{\xi}\grad f
\end{equation}
for any $\xi \in T_x \M$, where $\grad f \in \mathfrak{X}(\M)$ is the Riemannian gradient vector field of $f$ that maps $x \mapsto \grad f(x)$.

\section{Review of the indefinite Stiefel manifold and some additional propositions}
\label{sec:Review}
As in~\cite{van2024riemannian,van2025generalized}, we define the indefinite Stiefel manifold as
\begin{equation}
\iSt_{A, J}(p,n) \coloneqq \{X \in \R^{n \times p} \mid X^{\top}AX = J\}
\end{equation}
and regard it as an embedded submanifold of $\R^{n \times p}$.
\footnote{Since we assume that the number of positive/negative eigenvalues of $J$ is not greater than that of $A$ as in Section~\ref{sec:Notation}, $\iSt_{A,J}(p,n) \neq \emptyset$~\cite{van2024riemannian}.}
This is possible by the regular level set theorem~\cite{AbsMahSep2008,boumal2023intromanifolds}.
Specifically, by defining $F \colon \R^{n \times p} \to \Sym(p)$ as $F(X) \coloneqq X^{\top}AX - J$, we have $\iSt_{A,J}(p,n) = F^{-1}(\{0\})$. Furthermore, we have $\D F(X)[Y] = Y^{\top}AX + X^{\top}AY$ for $Y \in \R^{n \times p}$, and $\D F(X)$ at $X \in F^{-1}(\{0\})$ is surjective since for any $S \in \Sym(p)$, $\D F(X)[XJS/2] = (SJ^2 + J^2S)/2 = S$ holds.
Then, the tangent space of $\iSt_{A,J}(p,n)$ at $X \in \iSt_{A,J}(p,n)$ is given in~\cite{van2024riemannian} as
\begin{equation}
T_X \!\iSt_{A,J}(p,n) {}= \ker \D F(X) = \{\xi \in \R^{n \times p} \mid \xi^{\top}AX + X^{\top}A\xi = 0\} 
= \{\Omega AX \mid \Omega \in \Skew(n)\}.
\end{equation}

We would also like to endow $\iSt_{A,J}(p,n)$ with a Riemannian metric to make it a Riemannian manifold.
To this end, we endow an ambient space with a Riemannian metric and then endow $\iSt_{A,J}(p,n)$ with the induced metric.
However, the Riemannian metrics we use in the subsequent discussion, which are the same as those proposed in~\cite{van2025generalized}, are not necessarily defined in whole $\R^{n \times p}$.
In this paper, we therefore consider the open submanifold $\E$ of $\R^{n \times p}$ defined in~\eqref{eq:openE} and regard $\iSt_{A,J}(p,n)$ as an embedded submanifold of $\E$.
See also Remark~\ref{rem:E} below.

\begin{proposition}
The indefinite Stiefel manifold $\iSt_{A,J}(p,n)$ is an embedded submanifold of $\E$, where $\E$ is the open submanifold of $\R^{n \times p}$ defined in~\eqref{eq:openE}.
\end{proposition}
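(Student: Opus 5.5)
The plan is to show first that $\iSt_{A,J}(p,n) \subseteq \E$. After that, the already-established embedded-submanifold structure in $\R^{n \times p}$ can be transferred to the open subset $\E$.

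\emph{Step 1: containment.} Let $X \in \iSt_{A,J}(p,n)$. Then $X^{\top}AX = J$, and $J$ is invertible because $J^2 = I_p$, so $\det(X^{\top}AX) \neq 0$. For full rank, suppose $Xv = 0$ for some $v \in \R^p$. Then $Jv = X^{\top}AXv = 0$, and hence $v = J^2 v = 0$. So $X$ has rank $p$, which gives $\det(X^{\top}X) \neq 0$. By \eqref{eq:openE}, $X \in \E$.

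\emph{Step 2: transfer of the submanifold structure.} Restrict $F(X) = X^{\top}AX - J$ to the open submanifold $\E$, giving $F|_{\E} \colon \E \to \Sym(p)$. This map is smooth. By Step 1, $(F|_{\E})^{-1}(\{0\}) = F^{-1}(\{0\}) \cap \E = \iSt_{A,J}(p,n)$. Since $\E$ is open in $\R^{n \times p}$, we have $T_X\E = \R^{n \times p}$ and $\D(F|_{\E})(X) = \D F(X)$. The surjectivity argument given above therefore applies verbatim: for $S \in \Sym(p)$, $\D F(X)[XJS/2] = S$. So $0$ is a regular value of $F|_{\E}$, and the regular level set theorem shows that $\iSt_{A,J}(p,n)$ is an embedded submanifold of $\E$ of dimension $np - p(p+1)/2$.

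Alternatively, one can argue directly. An embedded submanifold $\M$ of a manifold that lies inside an open submanifold $\E$ is also embedded in $\E$, because slice charts can be restricted to their intersection with $\E$. Either route works.

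There is no real obstacle here. The only point needing care is Step 1, namely checking that points of $\iSt_{A,J}(p,n)$ automatically satisfy both open conditions defining $\E$. This follows from the invertibility of $J$.
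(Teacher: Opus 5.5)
Your proposal is correct and follows the paper's proof. The paper also first shows $\iSt_{A,J}(p,n)\subset\E$, using $\det(X^{\top}AX)=\det J\neq 0$ and $\ker X\subset\ker(X^{\top}AX)=\{0\}$. It then reruns the regular level set argument with $\E$ as the ambient manifold. Your restriction of $F$ to $\E$ spells out exactly that step.
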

\begin{proof}
We show the inclusion $\iSt_{A,J}(p,n) \subset \E$.
Let $X \in \iSt_{A,J}(p,n)$.
Then, we have $\det(X^{\top}AX) = \det(J) \neq 0$.
It follows that $\ker X \subset \ker (X^{\top}AX) = \{0\}$, implying $\ker X = \{0\}$.
Therefore, $\Rank(X^{\top}X) = \Rank X = p$, meaning $\det(X^{\top}X) \neq 0$.
Thus, $X \in \E$ holds.
Using this inclusion, the fact that $\iSt_{A,J}(p,n)$ is an embedded submanifold of $\E$ is proved in the same way as the proof of $\iSt_{A,J}(p,n)$ being an embedded submanifold of $\R^{n \times p}$ by the regular level set theorem.
\end{proof}
To regard $\iSt_{A,J}(p,n)$ as a Riemannian submanifold of $\E$, we first endow $\E$ with a Riemannian metric and then $\iSt_{A,J}(p,n)$ with the induced metric.
Specifically, we endow $\E$ with the Riemannian metric
\begin{equation}
\label{eq:metric_opensubmanifold}
\langle \xi, \eta\rangle_X \coloneqq \tr(\xi^{\top}G_X \eta), \quad \xi, \eta \in T_X \E = \R^{n \times p}, \ X \in \E,
\end{equation}
where $G \colon \E \ni X \mapsto G_X \in \Sym_{++}(n)$ is smooth.
The induced metric on $\iSt_{A,J}(p,n)$ is then defined as
\begin{equation}
\label{eq:Riemannian_metric}
\langle \xi, \eta\rangle_X \coloneqq \tr(\xi^{\top}G_X\eta), \qquad \xi, \eta \in T_X\!\iSt_{A,J}(p,n),\quad X \in \iSt_{A,J}(p,n).
\end{equation}

{
\begin{remark}
\label{rem:E}
In the literature~\cite{van2024riemannian,van2025generalized}, $\iSt_{A,J}(p,n)$ is regarded as an embedded submanifold of $\R^{n \times p}$ and endowed with the Riemannian metric~\eqref{eq:Riemannian_metric}.
Meanwhile, in this paper, we endow an open submanifold $\E$ of $\R^{n \times p}$ with the Riemannian metric~\eqref{eq:metric_opensubmanifold} and $\iSt_{A,J}(p,n)$ with the Riemannian submanifold structure.
This is because, in this paper, we need to compute the derivative $\D G(X)$ to investigate the second-order geometry of $\iSt_{A,J}(p,n)$.
Indeed, since $\E$ is endowed with the Riemannian metric~\eqref{eq:metric_opensubmanifold}, $G$ is defined in a sufficiently small open ball around $X$ in $\R^{n \times p}$.
Therefore, we have $\D G(X)[Y] = \lim_{t \to 0}(G(X + tY) - G(X))/t$ because $G(X+tY)$ is defined for $t \in \R$ sufficiently close to $0$.
Specific examples of choice of $G_X$ are shown in~\eqref{eq:G1} and~\eqref{eq:G2} below.
\end{remark}
}

For $X \in \iSt_{A,J}(p,n) \subset \E$, the tangent space $T_X\!\iSt_{A,J}(p,n)$ is a linear subspace of the inner product space $T_X \E = \R^{n \times p}$ since $\E$ is an open submanifold of $\R^{n \times p}$~\cite{AbsMahSep2008}.
Therefore, the orthogonal complement of $T_X\!\iSt_{A,J}(p,n)$, which is called the normal space of $\iSt_{A,J}(p,n)$ at $X$, can be defined.
This is written out~\cite{van2024riemannian} as
\begin{align}
N_X\!\iSt_{A,J}(p,n) &\coloneqq \{\zeta \in \R^{n \times p} \mid \langle \zeta, \xi\rangle_X = 0, \ \xi \in T_X \! \iSt_{A,J}(p,n)\} = \{G_X^{-1}AXW \mid W \in \Sym(p)\}.
\end{align}
Since $T_X \E = \R^{n \times p}$ is decomposed as a direct sum $\R^{n \times p} = T_X\!\iSt_{A,J}(p,n) \oplus N_X\!\iSt_{A,J}(p,n)$, we can uniquely decompose any $Y \in \R^{n \times p}$ as $Y = \xi + \zeta$ with $\xi \in T_X \! \iSt_{A,J}(p,n)$ and $\zeta \in N_X \! \iSt_{A,J}(p,n)$.
We can therefore define the orthogonal projection $P_X^{G} \colon \R^{n \times p} \to T_X\!\iSt_{A,J}(p,n)$ onto the tangent space by $P_X^{G}(Y) \coloneqq \xi$.
Specifically, \cite{van2024riemannian} reveals that
\begin{equation}
\label{eq:projection_original}
P_X^{G}(Y) = Y - G_X^{-1}AXU_{X,Y},
\end{equation}
where $U_{X,Y}$ is the solution to the Lyapunov equation~\cite{horn2012matrix}
\begin{equation}
\label{eq:Lyapunov_P}
(X^{\top}AG_X^{-1}AX)U + U(X^{\top}AG_X^{-1}AX) = 2\sym(X^{\top}AY)
\end{equation}
with respect to $U \in \R^{p \times p}$.
Therefore, the Riemannian gradient of a smooth function $f \colon \iSt_{A,J}(p,n) \to \R$ is given by
\begin{equation}
\grad^{G} f(X) = G_X^{-1} \grad_{\mathrm{E}} \bar{f}(X) - G_X^{-1}AX U_f,
\end{equation}
where $U_f$ is the unique solution to the Lyapunov equation
\begin{equation}
\label{eq:Lyapunov_f}
(X^{\top}AG_X^{-1}AX)U + U(X^{\top}AG_X^{-1}AX) = 2\sym(X^{\top}AG_X^{-1}\grad_{\mathrm{E}} \bar{f}(X))
\end{equation}
with respect to $U$.
Here, $\bar{f} \colon \E \to \R$ is a smooth extension of $f$ to $\E$ (i.e., $\bar{f}$ is smooth and its restriction to $\iSt_{A,J}(p,n)$ is equal to $f$) and $\grad_{\mathrm{E}} \bar{f}$ is the Euclidean gradient of $\bar{f}$.
To avoid any confusion regarding notation, note here that, in this paper, we consider the Euclidean gradient $\grad_{\mathrm{E}}$ only for $\bar{f}$ and always consider the Riemannian gradient otherwise.
Although the Lyapunov equations~\eqref{eq:Lyapunov_P} and~\eqref{eq:Lyapunov_f} can be efficiently solved from a numerical perspective especially when $p$ is very small, we note that the orthogonal projection onto a tangent space and gradient of a function are not explicitly written out.
This may be a difficulty in analyzing higher-order derivatives, e.g., the Hessian of a function.

Fortunately, we can avoid solving such Lyapunov equations.
To this end, a recent study~\cite{van2025generalized} proposes two specific choices of $G_X$ as
\begin{equation}
\label{eq:G1}
G_X^{(1)} \coloneqq \frac{1}{\rho}AXX^{\top}A + (A-AXJX^{\top}A)^2
\end{equation}
and
\begin{equation}
\label{eq:G2}
G_X^{(2)} \coloneqq \frac{1}{\rho}AXX^{\top}A + I_n-X(X^{\top}X)^{-1}X^{\top},
\end{equation}
where $\rho > 0$ is a parameter.
A key feature of these two choices is that $(G_X^{(i)})^{-1}AX=\rho XJ$ for $i=1,2$ hold on $\iSt_{A,J}(p,n)$.
Consequently, the Lyapunov equation appearing in the general projection
formula reduces to a simple equation that is easy to solve. This identity is also crucial in
obtaining explicit formulas for the Riemannian gradient and Hessian.
We here prove that the associated $G_X$ indeed defines Riemannian metrics on $\E$.
\begin{proposition}
For $X \in \E$, $G_X^{(1)}$ and $G_X^{(2)}$ in~\eqref{eq:G1} and~\eqref{eq:G2} are symmetric positive-definite matrices.
\end{proposition}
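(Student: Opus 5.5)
Symmetry is immediate in both cases: $A$, $XX^{\top}$, $XJX^{\top}$ and $X(X^{\top}X)^{-1}X^{\top}$ are symmetric, so $AXX^{\top}A$ and $A - AXJX^{\top}A$ are symmetric. The square of a symmetric matrix is symmetric, and so is $I_n - X(X^{\top}X)^{-1}X^{\top}$. For positive-definiteness the plan is to write each $G_X^{(i)}$ as a sum of two positive semidefinite matrices, $\frac{1}{\rho}AXX^{\top}A$ and a second term $M_X^{(i)}$. We then show that $v^{\top}G_X^{(i)}v = 0$ forces $v = 0$. Since $v^{\top}\frac{1}{\rho}AXX^{\top}Av = \frac{1}{\rho}\|X^{\top}Av\|^2$, vanishing of the quadratic form gives both $X^{\top}Av = 0$ and $v^{\top}M_X^{(i)}v = 0$. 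Positive semidefiniteness of $M_X^{(i)}$ then gives $M_X^{(i)}v = 0$.

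For $G_X^{(2)}$, the matrix $M_X^{(2)} = I_n - X(X^{\top}X)^{-1}X^{\top}$ is well defined because $X^{\top}X$ is invertible for $X \in \E$. It is the orthogonal projector onto $(\mathrm{range}\,X)^{\perp}$, hence positive semidefinite. From $M_X^{(2)}v = 0$ we get $v = Xc$ with $c = (X^{\top}X)^{-1}X^{\top}v$. Substituting into $X^{\top}Av = 0$ yields $(X^{\top}AX)c = 0$, and invertibility of $X^{\top}AX$ (the second defining condition of $\E$) gives $c = 0$, hence $v = 0$.

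For $G_X^{(1)}$, set $B \coloneqq A - AXJX^{\top}A$, so $M_X^{(1)} = B^2 = B^{\top}B$ is positive semidefinite, and $M_X^{(1)}v = 0$ is equivalent to $Bv = 0$. With $X^{\top}Av = 0$ this gives $Bv = Av - AXJ(X^{\top}Av) = Av = 0$, so invertibility of $A$ yields $v = 0$. This case does not even use the conditions defining $\E$.

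The main point to handle carefully is the correct use of semidefiniteness, namely that $v^{\top}Mv = 0$ with $M \succeq 0$ implies $Mv = 0$. In the $G^{(2)}$ case we must also invoke invertibility of $X^{\top}AX$, not merely full rank of $X$. This is precisely where the definition of $\E$ enters. Without it, $G_X^{(2)}$ could be singular: take $v = Xc$ with $X^{\top}AXc = 0$.
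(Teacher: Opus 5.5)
Your proof is correct and follows essentially the same route as the paper. The paper also splits $v^{\top}G_X^{(i)}v$ into $\frac{1}{\rho}\|X^{\top}Av\|_2^2$ plus $\|(A - AXJX^{\top}A)v\|_2^2$ (respectively $\|(I_n - X(X^{\top}X)^{-1}X^{\top})v\|_2^2$), deduces that both terms vanish, and concludes $v=0$ from the invertibility of $A$ (respectively of $X^{\top}AX$ for $X \in \E$). Your side remarks are also correct: the $G_X^{(1)}$ case does not use the definition of $\E$, and $G_X^{(2)}$ can be singular when $X^{\top}AX$ is not invertible.
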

\begin{proof}
Symmetry of the two matrices is trivial.

For arbitrary $v \in \R^n$, we have
\begin{equation}
v^{\top}G_X^{(1)}v = \frac{1}{\rho}\|X^{\top}Av\|_2^2 + \|(A-AXJX^{\top}A)v\|_2^2 \geq 0,
\end{equation}
where $\|\cdot\|_2$ is the $2$-norm.
If $v^{\top}G_X^{(1)}v = 0$, then $X^{\top}Av = (A-AXJX^{\top}A)v = 0$ should hold.
It follows that $Av = AXJX^{\top}Av = 0$, implying $v = 0$ since $A$ is invertible.
Therefore, $G^{(1)}_X$ is positive-definite.
Similarly, we have
\begin{equation}
v^{\top}G_X^{(2)}v = \frac{1}{\rho}\|X^{\top}Av\|_2^2 + \|(I_n - X(X^{\top}X)^{-1}X^{\top})v\|_2^2 \geq 0.
\end{equation}
If $v^{\top}G_X^{(2)}v = 0$, then $X^{\top}Av = (I_n - X(X^{\top}X)^{-1}X^{\top})v = 0$ should hold.
It follows that $v = Xw$ for $w \coloneqq (X^{\top}X)^{-1}X^{\top}v$.
Therefore, $0 = X^{\top}Av = X^{\top}AXw$ holds, implying $w = 0$ since $X^{\top}AX$ is invertible for $X \in \E$.
Thus, we obtain $v = Xw = 0$.
Hence, $G_X^{(2)}$ is positive-definite.
\end{proof}

According to~\cite{van2025generalized}, both of $G_X^{(1)}$ and $G_X^{(2)}$ enable a simpler computation of the corresponding orthogonal projection onto the tangent space $T_X\!\iSt_{A,J}(p,n)$ and Riemannian gradient of a function on $\iSt_{A,J}(p,n)$.
For $X \in \iSt_{A,J}(p,n)$, their inverses can be written as
\begin{equation}
\label{eq:inverse1_0}
{G_X^{(1)}}^{-1} = \rho XX^{\top} + A^{-1}X_{\perp}(X_{\perp}^{\top}X_{\perp})^{-1}X_{\perp}^{\top}A^{-1}
\end{equation}
and
\begin{equation}
\label{eq:inverse2_0}
{G_X^{(2)}}^{-1} = \rho XX^{\top} + A^{-1}X_{\perp}(X_{\perp}^{\top}A^{-1}X_{\perp})^{-1}X_{\perp}^{\top}X_{\perp}(X_{\perp}^{\top}A^{-1}X_{\perp})^{-1}X_{\perp}^{\top}A^{-1},
\end{equation}
where $X_{\perp} \in \R^{n \times (n-p)}$ is an arbitrarily chosen full-rank matrix satisfying $X^{\top}X_{\perp} = 0$.
Then, specifically, the orthogonal projection $P_X^{G^{(i)}}$ for $i = 1, 2$ can be computed as
\begin{equation}
\label{eq:projection_simplified}
P_X^{G^{(i)}}(Y) = Y - XJ\sym(X^{\top}AY).
\end{equation}
Indeed, for $G_X = {G_X^{(i)}}$ with $i = 1, 2$, the Lyapunov equation~\eqref{eq:Lyapunov_P} reduces to
\begin{equation}
\rho U + \rho U = 2\sym(X^{\top}AY)
\end{equation}
since ${G_X^{(i)}}^{-1}AX = \rho XX^{\top}AX = \rho XJ$ and $X^{\top}A{G_X^{(i)}}^{-1}AX = \rho X^{\top}AXJ = \rho J^2 = \rho I_p$ holds from~\eqref{eq:inverse1_0} and~\eqref{eq:inverse2_0}.
It follows that $U = \rho^{-1}\sym(X^{\top}AY)$, together with \eqref{eq:projection_original} leads to
\begin{equation}
P_X^{G^{(i)}}(Y) = Y - {G_X^{(i)}}^{-1}AXU = Y - \rho XJ \rho^{-1}\sym(X^{\top}AY) = Y - XJ \sym(X^{\top}AY).
\end{equation}

Therefore, the corresponding Riemannian gradient of a smooth function $f \colon \iSt_{A,J}(p,n) \to \R$, which is given by $\grad^{G^{(i)}}f(X) = P^{G^{(i)}}_X({G^{(i)}_X}^{-1}\grad_{\mathrm{E}} \bar{f}(X))$, is written as follows.

\begin{proposition}
Let $f\colon\iSt_{A,J}(p,n)\to\R$ be a smooth function and $\bar f\colon\E\to\R$ be a smooth extension of $f$.
The Riemannian gradient of $f$ with respect to the Riemannian metric~\eqref{eq:Riemannian_metric}, where $G_X = G_X^{(i)}$, $i = 1, 2$, is written as
\begin{equation}
\grad^{G^{(i)}}f(X) = {G^{(i)}_X}^{-1}\grad_{\mathrm{E}} \bar{f}(X) - XJ\sym(X^{\top}A{G^{(i)}_X}^{-1}\grad_{\mathrm{E}} \bar{f}(X)).\label{eq:Rgrad}
\end{equation}
\end{proposition}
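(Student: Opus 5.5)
The plan is to combine the standard characterization of the Riemannian gradient on a Riemannian submanifold with the simplified projection formula~\eqref{eq:projection_simplified}.

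\textbf{Step 1: gradient on the ambient manifold.} Let $\bar f$ be a smooth extension of $f$ to $\E$. For $X\in\E$ and $Y\in\R^{n\times p}$ we have
\[
\D\bar f(X)[Y]=\tr(\grad_{\mathrm E}\bar f(X)^{\top}Y)=\tr\bigl((G_X^{-1}\grad_{\mathrm E}\bar f(X))^{\top}G_X Y\bigr),
\]
where we used the symmetry of $G_X$. Hence the Riemannian gradient of $\bar f$ on $(\E,\langle\cdot,\cdot\rangle)$ is $G_X^{-1}\grad_{\mathrm E}\bar f(X)$. The inverse $G_X^{-1}$ exists because the previous proposition shows that $G_X^{(i)}$ is positive definite.

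\textbf{Step 2: projection to the tangent space.} Take $X\in\iSt_{A,J}(p,n)$ and $\xi\in T_X\!\iSt_{A,J}(p,n)$. Since $f=\bar f$ on the manifold, $\D f(X)[\xi]=\D\bar f(X)[\xi]=\langle G_X^{-1}\grad_{\mathrm E}\bar f(X),\xi\rangle_X$. Decompose
\[
G_X^{-1}\grad_{\mathrm E}\bar f(X)=P_X^{G}(G_X^{-1}\grad_{\mathrm E}\bar f(X))+\zeta,\qquad \zeta\in N_X\!\iSt_{A,J}(p,n).
\]
Because $\zeta$ is orthogonal to $\xi$, its contribution vanishes. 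The projected vector is tangent and satisfies the defining identity of the gradient. By uniqueness of the Riesz representer in $T_X\!\iSt_{A,J}(p,n)$, we get $\grad^{G}f(X)=P_X^{G}(G_X^{-1}\grad_{\mathrm E}\bar f(X))$.

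\textbf{Step 3: substitution.} Insert $Y=G_X^{(i)\,-1}\grad_{\mathrm E}\bar f(X)$ into the simplified projection $P_X^{G^{(i)}}(Y)=Y-XJ\sym(X^{\top}AY)$ from~\eqref{eq:projection_simplified}. This gives~\eqref{eq:Rgrad} directly.

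\textbf{Main point to check.} The only nonroutine ingredient is the validity of~\eqref{eq:projection_simplified}, which depends on the identity $G_X^{(i)\,-1}AX=\rho XJ$ for $X$ on the manifold. If the inverse formulas~\eqref{eq:inverse1_0} and~\eqref{eq:inverse2_0} are taken from the literature, the text already supplies this. If not, verify it directly by checking $G_X^{(i)}\rho XJ=AX$.

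For $G^{(1)}$: we have $\rho^{-1}AXX^{\top}A\cdot\rho XJ=AXJ^2=AX$. Also $(A-AXJX^{\top}A)XJ=AXJ-AXJ\cdot J\cdot J=0$, using $X^{\top}AX=J$ and $J^2=I_p$. Summing gives $G_X^{(1)}\rho XJ=AX$.

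For $G^{(2)}$: the first term again gives $AX$, and the projector $I_n-X(X^{\top}X)^{-1}X^{\top}$ annihilates $X$. So $G_X^{(2)}\rho XJ=AX$ as well.

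With this identity, the Lyapunov equation~\eqref{eq:Lyapunov_P} reduces to $2\rho U=2\sym(X^{\top}AY)$, which completes the argument.
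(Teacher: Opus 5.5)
Your proposal is correct and follows the paper's own route: you identify the gradient as $P_X^{G^{(i)}}({G_X^{(i)}}^{-1}\grad_{\mathrm{E}}\bar f(X))$ and substitute into the simplified projection~\eqref{eq:projection_simplified}. Your direct check that $G_X^{(i)}\rho XJ=AX$ adds something useful, because it replaces the paper's appeal to the cited inverse formulas~\eqref{eq:inverse1_0} and~\eqref{eq:inverse2_0}; you still rely on the Lyapunov-based projection formula~\eqref{eq:projection_original}, exactly as the paper does.
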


Here, we can simplify~\eqref{eq:inverse1_0} and~\eqref{eq:inverse2_0} as follows.

\begin{proposition}
\label{prop:inverse}
Let $X \in \iSt_{A,J}(p,n)$.
For $G_X^{(1)}$ and $G_X^{(2)}$ defined by~\eqref{eq:G1} and \eqref{eq:G2}, we have
\begin{equation}
\label{eq:inverse1}
{G_X^{(1)}}^{-1} = \rho XX^{\top} + A^{-1}(I_n - X(X^{\top}X)^{-1}X^{\top})A^{-1}
\end{equation}
and
\begin{equation}
\label{eq:inverse2}
{G_X^{(2)}}^{-1} = \rho XX^{\top} + (I_n - XJX^{\top}A)(I_n - XJX^{\top}A)^{\top}.    
\end{equation}
\end{proposition}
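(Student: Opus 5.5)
The most robust route is direct verification rather than simplifying \eqref{eq:inverse1_0} and \eqref{eq:inverse2_0}. For each $i=1,2$ I will multiply the proposed right-hand side $H^{(i)}$ by $G_X^{(i)}$ and show that $G_X^{(i)}H^{(i)} = I_n$. Since both matrices are square, this gives $H^{(i)} = {G_X^{(i)}}^{-1}$. Throughout I will use $X^{\top}AX = J$ and $J^2 = I_p$. I will also abbreviate $P \coloneqq X(X^{\top}X)^{-1}X^{\top}$, the Euclidean orthogonal projector onto the range of $X$, and $Q \coloneqq XJX^{\top}A$. The identities $Q^2 = XJ(X^{\top}AX)JX^{\top}A = Q$ show that $Q$ is an oblique projector. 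The further identities $QX = X$ and $X^{\top}A Q = X^{\top}A$ will do most of the work.

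\textbf{Case $G^{(1)}$.} Write $G_X^{(1)} = \rho^{-1}AXX^{\top}A + (A - AQ)^2$. Since $A - AXJX^{\top}A = A(I_n - Q)$ is symmetric, I have $(A-AQ)^2 = A(I_n-Q)A(I_n-Q)$. I will expand $G_X^{(1)}H^{(1)}$ with $H^{(1)} = \rho XX^{\top} + A^{-1}(I_n-P)A^{-1}$ into four terms.
\begin{itemize}
\item[(a)] $\rho^{-1}AXX^{\top}A\cdot\rho XX^{\top} = AXJX^{\top} = AQA^{-1}$.
\item[(b)] $\rho^{-1}AXX^{\top}A\cdot A^{-1}(I_n-P)A^{-1} = 0$, because $X^{\top}(I_n-P) = 0$.
\item[(c)] $(A-AQ)^2\,XX^{\top} = 0$, because $(I_n-Q)X = 0$.
\item[(d)] $A(I_n-Q)A(I_n-Q)A^{-1}(I_n-P)A^{-1}$. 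Using $(I_n-Q)A^{-1} = A^{-1}(I_n - AXJX^{\top}) = A^{-1}(I_n-Q)^{\top}$, this becomes $A(I_n-Q)(I_n-Q^{\top})(I_n-P)A^{-1}$. Since $Q^{\top}(I_n-P) = AXJX^{\top}(I_n-P) = 0$, it reduces to $A(I_n-Q)(I_n-P)A^{-1}$. Since $QP = Q$ (because $X^{\top}AP = X^{\top}A\cdot$ acting on range $X$ is handled by $Q\,X(\cdot) = X(\cdot)$; precisely $QP = XJX^{\top}AX(X^{\top}X)^{-1}X^{\top} = P$), I get $(I_n-Q)(I_n-P) = I_n - Q - P + P = I_n - Q$. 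So term (d) equals $A(I_n-Q)A^{-1}$.
\end{itemize}
Adding (a) and (d) gives $AQA^{-1} + A(I_n-Q)A^{-1} = I_n$.

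\textbf{Case $G^{(2)}$.} I write $G_X^{(2)} = \rho^{-1}AXX^{\top}A + (I_n-P)$ and $H^{(2)} = \rho XX^{\top} + (I_n-Q)(I_n-Q)^{\top}$, and again expand into four terms.
\begin{itemize}
\item[(a)] $\rho^{-1}AXX^{\top}A\cdot\rho XX^{\top} = AXJX^{\top}$.
\item[(b)] $X^{\top}A(I_n-Q) = X^{\top}A - JJX^{\top}A = 0$, so $\rho^{-1}AXX^{\top}A(I_n-Q)(I_n-Q)^{\top} = 0$.
\item[(c)] $(I_n-P)\rho XX^{\top} = 0$.
\item[(d)] $(I_n-P)(I_n-Q)(I_n-Q)^{\top}$. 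Here $PQ = Q$ because the range of $Q$ lies in the range of $X$, so $(I_n-P)(I_n-Q) = I_n - P$. The term then becomes $(I_n-P)(I_n - Q^{\top}) = I_n - P - Q^{\top} + PQ^{\top}$. Now $Q^{\top} = AXJX^{\top}$, which satisfies $Q^{\top}P = Q^{\top}$. However, I need $PQ^{\top} = X(X^{\top}X)^{-1}X^{\top}AXJX^{\top} = X(X^{\top}X)^{-1}X^{\top} = P$. So term (d) equals $I_n - Q^{\top} = I_n - AXJX^{\top}$.
\end{itemize}
Adding (a) and (d) gives $I_n$.

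The main obstacle is purely bookkeeping. One has to track which one-sided projector identities hold, namely $QP=P$, $PQ=Q$, $PQ^{\top}=P$, $Q^{\top}(I_n-P)=0$, and $(I_n-Q)A^{-1}=A^{-1}(I_n-Q)^{\top}$, and keep oblique and orthogonal projectors apart. I will state these identities as a short list before the two computations, each following in one line from $X^{\top}AX=J$ and $J^2=I_p$.
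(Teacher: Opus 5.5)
Your direct verification is correct, and it takes a genuinely different route from the paper's. The paper never multiplies $G_X^{(i)}$ by a candidate inverse. Instead, it starts from the $X_\perp$-based formulas \eqref{eq:inverse1_0} and \eqref{eq:inverse2_0} quoted from the literature. It proves $X(X^{\top}X)^{-1}X^{\top} + X_\perp(X_\perp^{\top}X_\perp)^{-1}X_\perp^{\top} = I_n$ by solving $XS + X_\perp T = I_n$ for $S$ and $T$. It then invokes the cited identity \eqref{eq:identity}, which says $A^{-1}X_\perp(X_\perp^{\top}A^{-1}X_\perp)^{-1}X_\perp^{\top} = I_n - XJX^{\top}A$. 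Substituting this and its transpose collapses the second term of \eqref{eq:inverse2_0} to $(I_n - XJX^{\top}A)(I_n - XJX^{\top}A)^{\top}$. That argument is only a few lines, but its correctness rests entirely on the imported formulas.

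Your computation of $G_X^{(i)}H^{(i)} = I_n$ is self-contained. It uses nothing beyond $X^{\top}AX = J$ and $J^2 = I_p$, together with invertibility of $X^{\top}X$, which is already guaranteed on $\iSt_{A,J}(p,n)$. It never needs a choice of $X_\perp$, and in effect it re-verifies the quoted formulas. It also brings out the projector relations $QX = X$, $PQ = Q$, $QP = P$, $PQ^{\top} = P$, which the paper only records afterwards in Proposition~\ref{prop:G_summary} and Remark~\ref{rem:projection}.

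One cleanup: in term (d) of the $G^{(1)}$ case you first assert $QP = Q$ and then, in a garbled parenthetical, compute $QP = P$. Only $QP = P$ is true, and it is what you actually use, so state it directly.
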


\begin{proof}
Since $X \in \R^{n \times p}$ and $X_{\perp} \in \R^{n \times (n-p)}$ are both of full rank, there exist $S \in \R^{p \times n}$ and $T \in \R^{(n-p) \times n}$ such that $\begin{bmatrix}
    X & X_{\perp}
\end{bmatrix}\begin{bmatrix}
    S\\T
\end{bmatrix} = I_n$, i.e.,
$XS + X_{\perp}T = I_n$.
Here, multiplying the equality by $X^{\top}$ from the left gives $S = (X^{\top}X)^{-1}X^{\top}$.
Similarly, we have $T = (X_{\perp}^{\top}X_{\perp})^{-1}X_{\perp}^{\top}$.
Therefore, it holds that
\begin{equation}
X(X^{\top}X)^{-1}X^{\top} + X_{\perp}(X_{\perp}^{\top}X_{\perp})^{-1}
X_{\perp}^{\top} = I_n.
\end{equation}
Using this identity, \eqref{eq:inverse1_0} reduces to~\eqref{eq:inverse1}.

Furthermore, the following formula is given in~\cite{van2024riemannian}:
\begin{equation}
\label{eq:identity}
XJX^{\top}A + A^{-1}X_{\perp}(X_{\perp}^{\top}A^{-1}X_{\perp})^{-1}X_{\perp}^{\top} = I_n.
\end{equation}
The second equality~\eqref{eq:inverse2} is a direct consequence of~\eqref{eq:inverse2_0} and~\eqref{eq:identity}.
\end{proof}

On the standard Stiefel manifold $\St(p,n) = \iSt_{I_n, I_p}(p,n)$, the matrix $XX^{\top} \colon \R^n \to \R^n$ for $X \in \St(p,n)$ (i.e., $X^{\top}X = I_p$) is the orthogonal projection onto $\Span(X)$ since for any $Xc \in \Span(X)$ and $X_{\perp}d \in \Span(X)^{\perp}$ with $c \in \R^p$ and $d \in \R^{n-p}$, it holds that $(XX^{\top})(Xc) = X(X^{\top}X)c = Xc$ and $(XX^{\top})(X_{\perp}d) = X(X^{\top}X_{\perp})d = 0$.
However, for $\iSt_{A,J}(p,n)$, the matrix $XX^{\top}$ in~\eqref{eq:inverse1} and~\eqref{eq:inverse2} is not an orthogonal projection in general. Meanwhile, the second terms in both equations contain orthogonal projections (see also Remark~\ref{rem:projection} below).
To obtain a clearer perspective, we define some matrices depending on $X \in \iSt_{A,J}(p,n)$ and rewrite $G_X^{(i)}$ and the inverse ${G_X^{(i)}}^{-1}$ for $i = 1, 2$.

\begin{proposition}
\label{prop:G_summary}
For $X \in \iSt_{A,J}(p,n)$, we define symmetric matrices
\begin{equation}
M_X \coloneqq (X^{\top}X)^{-1}, \quad \Pi_X \coloneqq I_n - XM_X X^{\top}, \quad S_X \coloneqq A - AXJX^{\top}A
\end{equation}
and square matrix
\begin{equation}
Q_X \coloneqq A^{-1}S_X = I_n - XJX^{\top}A.
\end{equation}
It holds that
\begin{equation}
\label{eq:PiXSX}
\Pi_X X = S_X X = 0, \quad \Pi_X^2 = \Pi_X, \quad Q_X^2 = Q_X.
\end{equation}
Then, \eqref{eq:G1} and \eqref{eq:G2} can be rewritten as
\begin{equation}
\label{eq:G1_2}
G^{(1)}_X = \frac{1}{\rho} AXX^{\top}A + S_X^2
\end{equation}
and
\begin{equation}
\label{eq:G2_2}
G^{(2)}_X = \frac{1}{\rho} AXX^{\top}A + \Pi_X.
\end{equation}
Furthermore, their inverses are written as
\begin{equation}
\label{eq:inverse1_2}
{G^{(1)}_X}^{-1} = \rho XX^{\top} + A^{-1}\Pi_X A^{-1}
\end{equation}
and \noeqref{eq:G2_2}\noeqref{eq:inverse1_2}
\begin{equation}
\label{eq:inverse2_2}
{G^{(2)}_X}^{-1} = \rho XX^{\top} + Q_X Q_X^{\top} = \rho XX^{\top} + A^{-1}S_X^2 A^{-1}.
\end{equation}
Both inverses ${G^{(i)}_X}^{-1}$ with $i = 1, 2$ satisfy
\begin{equation}
X^{\top}A{G^{(i)}_X}^{-1}K = \rho J X^{\top}K
\end{equation}
for any $K \in \R^{n \times p}$.
\end{proposition}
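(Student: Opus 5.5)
The plan is to verify each claim directly from the definitions, using only $X^{\top}AX = J$, $J^2 = I_p$, the symmetry of $A$ and $J$, and \cref{prop:inverse}.

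First, symmetry of $M_X$, $\Pi_X$ and $S_X$ is immediate from the symmetry of $X^{\top}X$, $A$ and $J$. For \eqref{eq:PiXSX}, the first identity is a one-line computation: $\Pi_X X = X - X(X^{\top}X)^{-1}X^{\top}X = 0$. For $S_X$ we use $X^{\top}AX = J$ and $J^2 = I_p$:
\begin{equation}
S_X X = AX - AXJX^{\top}AX = AX - AXJ^2 = 0 .
\end{equation}
Idempotency of $\Pi_X$ is the standard computation $XM_XX^{\top}XM_XX^{\top} = XM_XX^{\top}$. For $Q_X$, we expand
\begin{equation}
(XJX^{\top}A)^2 = XJ(X^{\top}AX)JX^{\top}A = XJ^3X^{\top}A = XJX^{\top}A ,
\end{equation}
so $XJX^{\top}A$ is idempotent, and hence so is $Q_X = I_n - XJX^{\top}A$. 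The identity $Q_X = A^{-1}S_X$ follows by simply factoring out $A^{-1}$.

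Next, \eqref{eq:G1_2} is just \eqref{eq:G1} with $S_X$ substituted, and \eqref{eq:G2_2} is \eqref{eq:G2} with $\Pi_X$ substituted. Likewise, \eqref{eq:inverse1_2} is \eqref{eq:inverse1} rewritten using $\Pi_X$. For \eqref{eq:inverse2_2}, the first form is \eqref{eq:inverse2}. For the second form we compute
\begin{equation}
Q_XQ_X^{\top} = A^{-1}S_X (A^{-1}S_X)^{\top} = A^{-1}S_X S_X A^{-1} = A^{-1}S_X^2A^{-1},
\end{equation}
using the symmetry of $S_X$ and $A$.

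Finally, for the identity $X^{\top}A{G^{(i)}_X}^{-1}K = \rho JX^{\top}K$, we split each inverse into its two terms. The first term gives $\rho X^{\top}AXX^{\top}K = \rho JX^{\top}K$. We then show that the second term is annihilated by $X^{\top}A$ on the left. For $i=1$,
\begin{equation}
X^{\top}AA^{-1}\Pi_XA^{-1} = (\Pi_XX)^{\top}A^{-1} = 0 .
\end{equation}
For $i=2$, we use $X^{\top}AQ_X = X^{\top}S_X = (S_XX)^{\top} = 0$, so $X^{\top}AQ_XQ_X^{\top} = 0$.

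There is no real obstacle here, since everything is bookkeeping. The only point needing care is the use of $X^{\top}AX = J$ together with $J^2 = I_p$, in particular $J^3 = J$ in the idempotency of $XJX^{\top}A$, and remembering that $Q_X$ is not symmetric, so it must be transposed correctly.
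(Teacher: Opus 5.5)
Your proposal is correct and follows essentially the same route as the paper. You verify \eqref{eq:PiXSX} directly from $X^{\top}AX = J$ and $J^2 = I_p$, and you obtain the rewritten metrics and inverses from \eqref{eq:G1}, \eqref{eq:G2}, and \cref{prop:inverse}. You also get the final identity by noting that $X^{\top}AX = J$ handles the $\rho XX^{\top}$ term while $X^{\top}\Pi_X = X^{\top}S_X = 0$ kills the second term; you simply make explicit a few steps the paper calls straightforward, such as $Q_XQ_X^{\top} = A^{-1}S_X^2A^{-1}$.
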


\begin{proof}
We have $\Pi_X X = X - X(X^{\top}X)^{-1}X^{\top}X = 0$, $S_X X = AX - AXJX^{\top}AX = AX - AXJ^2 = 0$, $\Pi_X^2 = I_n - 2XM_X X^{\top} + XM_XX^{\top}XM_XX^{\top} = I_n - XM_XX^{\top} = \Pi_X$, and $Q_X^2 = I_n - 2XJX^{\top}A + XJX^{\top}AXJX^{\top}A = I_n - XJX^{\top}A = Q_X$ from $X^{\top}AX = J$ and $J^2 = I_p$, completing the proof of \eqref{eq:PiXSX}.
Equations~\eqref{eq:G1_2}--\eqref{eq:inverse2_2} are straightforward from \eqref{eq:G1}, \eqref{eq:G2}, \eqref{eq:inverse1}, and \eqref{eq:inverse2}.

Finally, for $K \in \R^{n \times p}$,
\begin{equation}
X^{\top}A{G^{(1)}_X}^{-1}K = X^{\top}A(\rho XX^{\top} + A^{-1}\Pi_X A^{-1})K = \rho JX^{\top}K
\end{equation}
and
\begin{equation}
X^{\top}A{G^{(2)}_X}^{-1}K = X^{\top}A(\rho XX^{\top} + Q_XQ_X^{\top})K = \rho JX^{\top}K,
\end{equation}
completing the proof.
\end{proof}

{
\begin{remark}
\label{rem:projection}
Note that $\Pi_X$ in~\eqref{eq:inverse1_2} and $Q_X$ in~\eqref{eq:inverse2_2} are idempotent from~\eqref{eq:PiXSX}.
Therefore, they are projection matrices.
Specifically, as for the former, $\Pi_X = I_n - X(X^{\top}X)^{-1}X^{\top}$ is the orthogonal projection onto $\Span(X)^{\perp}$ with respect to the standard inner product in $\R^n$ since $\Pi_X X = 0$ and $\Pi_X X_{\perp} = X_{\perp}$ imply $\ker(\Pi_X) = \Span(X)$ and $\Span(\Pi_X) = \Span(X_{\perp}) = \Span(X)^{\perp}$.
As for the latter, $Q_X$ is the orthogonal projection with respect to the (possibly indefinite) inner product in $\R^n$ defined via $A$ since $Q_X X = A^{-1}S_X X = 0$ and $Q_X^{\top}AX = S_X X = 0$ imply $\ker(Q_X) = \Span(X)$ and $\Span(Q_X) = \Span(X)^{\perp_A}$, where
\begin{align}
\Span(X)^{\perp_A} &\coloneqq \{v \in \R^n \mid \forall w \in \Span(X), v^{\top}Aw = 0\} \\
&= \{v \in \R^n \mid \forall u \in \R^p, v^{\top}AXu = 0\} \\
&= \{v \in \R^n \mid X^{\top}Av = 0\}.
\end{align}
Indeed,
$Q_X X = 0$ yields $\ker(Q_X) \supset \Span(X)$, and $\ker(Q_X) \subset \Span(X)$ also holds since $v \in \ker(Q_X)$ satisfies $v = XJX^{\top}Av \in \Span(X)$.
Furthermore, $Q_X^{\top}AX = 0$ yields $\Span(Q_X) \subset \Span(X)^{\perp_A}$, and $\Span(Q_X) \supset \Span(X)^{\perp_A}$ also holds since $v \in \Span(X)^{\perp_A}$ satisfies $v = v - XJX^{\top}Av = Q_X v \in \Span(Q_X)$.
Although $Q_X=A^{-1}S_X$ is idempotent, the symmetric matrix $S_X$ is not idempotent in general and, hence, is not generally a projection matrix.
\end{remark}
}

\section{Second-order geometry of the indefinite Stiefel manifold}
\label{sec:second-order}
One of our goals is to develop Newton-type methods for optimization problems on the indefinite Stiefel manifold $\iSt_{A,J}(p,n)$.
To this end, it is necessary to be able to compute the Riemannian Hessian of a given objective function defined on $\iSt_{A,J}(p,n)$.
In this section, we compute the Levi-Civita connection on $\iSt_{A,J}(p,n)$ before computing the Hessian of a function on $\iSt_{A,J}(p,n)$.

As introduced in the previous section, we endow $\iSt_{A,J}(p,n)$ with a Riemannian metric~\eqref{eq:Riemannian_metric} with $G_X$ being $G_X^{(1)}$ in~\eqref{eq:G1} or $G_X^{(2)}$ in~\eqref{eq:G2}.
We derive specific formulas for the Levi-Civita connection and Hessian with respect to $G_X^{(1)}$ and $G_X^{(2)}$.

\subsection{Levi-Civita connection on $\E$}

Note that the indefinite Stiefel manifold is an embedded submanifold of $\E$.
We derive a formula for the Levi-Civita connection $\bar{\nabla} \colon \mathfrak{X}(\E) \times \mathfrak{X}(\E) \to \mathfrak{X}(\E)$ on $\E$ with respect to the Riemannian metric~\eqref{eq:metric_opensubmanifold} in this subsection and then that on $\iSt_{A,J}(p,n)$ with respect to the induced metric~\eqref{eq:Riemannian_metric} in the subsequent subsections.

We define a map $G \colon \E \to \Sym_{++}(n)$ by $G(X) \coloneqq G_X$.
Throughout this subsection, let $X \in \E$.
For vector fields $\bar{U}, \bar{V}, \bar{W} \in \mathfrak{X}(\E)$, Koszul's formula~\cite{lee2018introduction} yields
\begin{align}
2\langle \bar{\nabla}_{\bar{U}} \bar{V}, \bar{W}\rangle_X ={} &\bar{U}\langle \bar{V}, \bar{W}\rangle_X + \bar{V}\langle \bar{W}, \bar{U}\rangle_X - \bar{W}\langle \bar{U}, \bar{V}\rangle_X\\
&+ \langle[\bar{U}, \bar{V}], \bar{W}\rangle_X - \langle [\bar{V}, \bar{W}], \bar{U}\rangle_X + \langle [\bar{W}, \bar{U}], \bar{V}\rangle_X.\label{eq:Koszul}
\end{align}
Here, $\bar{U}\langle \bar{V}, \bar{W}\rangle_X$ means a real number obtained by applying the vector field $\bar{U}$ to a function $X \mapsto \langle \bar{V}, \bar{W}\rangle_X = \tr(\bar{V}(X)^{\top}G_X \bar{W}(X))$ on $\E$, i.e.,
\begin{align}
\bar{U}\langle \bar{V}, \bar{W}\rangle_X
= \tr(&\D \bar{V}(X)[\bar{U}(X)]^{\top}G_X \bar{W}(X) \\
&+ \bar{V}(X)^{\top}\D G(X)[\bar{U}(X)]\bar{W}(X) + \bar{V}(X)^{\top}G_X \D \bar{W}(X)[\bar{U}(X)]).
\end{align}
The Lie bracket $[\bar{U}, \bar{V}]$ is a vector field on $\E$ defined to satisfy
\begin{equation}
[\bar{U}, \bar{V}](\bar{f}) \coloneqq \bar{U}(\bar{V}\bar{f}) - \bar{V}(\bar{U}\bar{f})
\end{equation}
for any smooth function $\bar{f} \colon \E \to \R$.
Regarding the Levi-Civita connection $\bar{\nabla}$, for $\xi \coloneqq \bar{U}(X), \eta \coloneqq \bar{V}(X) \in T_X \E = \R^{n \times p}$, we define
\begin{equation}
\Gamma_X(\xi, \eta) \coloneqq (\bar{\nabla}_{\bar{U}} \bar{V})(X) - \D \bar{V}(X)[\xi],
\end{equation}
where
\begin{equation}
\D \bar{V}(X)[\xi] \coloneqq \lim_{t \to 0}\frac{\bar{V}(X + t\xi) - \bar{V}(X)}{t}.
\end{equation}
This directional derivative makes sense since $X + t\xi$ belongs to $\E$ for $t$ sufficiently close to $0$ and therefore $\bar{V}(X+t\xi)$ is defined.
Note that $\Gamma_X(\xi, \eta)$ does not depend on $\bar{U}$ or $\bar{V}$ except the values $\xi = \bar{U}(X)$ and $\eta = \bar{V}(X)$ at $X$.
The term $\Gamma_X(\xi, \eta)$ in $(\bar{\nabla}_{\bar{U}}\bar{V})(X) = \D \bar{V}(X)[\xi] + \Gamma_X(\xi, \eta)$ is expressed with Christoffel symbols when using local coordinates.\footnote{The matrix entries provide global coordinates on the open submanifold $\E$.}
Therefore, we call $\Gamma_X \colon \R^{n \times p} \times \R^{n \times p} \to \R^{n \times p}$ the Christoffel function as in~\cite{nguyen2024second}.
Note that $\Gamma_X$ is bilinear.
To obtain an explicit formula for the Levi-Civita connection $\bar{\nabla}_{\bar{U}}\bar{V}$, it suffices to investigate $\Gamma_X$.

\begin{proposition}
The Levi-Civita connection $\bar{\nabla}$ on the Riemannian manifold $\E$ endowed with the Riemannian metric \eqref{eq:metric_opensubmanifold} acts on vector fields $\bar{U}, \bar{V} \in \mathfrak{X}(\E)$ with $\bar{U}(X) = \xi, \bar{V}(X) = \eta \in T_X \E = \R^{n \times p}$ as
\begin{align}
(\bar{\nabla}_{\bar{U}} \bar{V})(X) &= \D \bar{V}(X)[\xi] + \Gamma_X(\xi, \eta)\\
&= \D \bar{V}(X)[\xi] + \frac{1}{2}(G_X^{-1}(\D G(X)[\xi] \eta + \D G(X)[\eta]\xi) - \D G(X)^*[\sym(\xi \eta^{\top})]),
\end{align}
where $\D G(X)^* \colon \Sym(n) \to \R^{n \times p}$ is the adjoint of $\D G(X) \colon \R^{n \times p} \to \Sym(n)$ with respect to the inner product $\langle \cdot, \cdot\rangle_X$ in $\R^{n \times p}$ and the standard inner product in $\Sym(n)$ satisfying $\langle\D G(X)^*[S], \zeta\rangle_X = \tr(S\D G(X)[\zeta])$ for any $\zeta \in \R^{n \times p}$ and $S \in \Sym(n)$.
\end{proposition}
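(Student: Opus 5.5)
We plan to evaluate Koszul's formula~\eqref{eq:Koszul} term by term on the open submanifold $\E$, where the matrix entries give global coordinates, and read off $\Gamma_X$ from the result. Since $\E$ is open in $\R^{n\times p}$, the Lie bracket of $\bar U,\bar V\in\mathfrak{X}(\E)$ takes the explicit form $[\bar U,\bar V](X)=\D\bar V(X)[\bar U(X)]-\D\bar U(X)[\bar V(X)]$. To simplify notation we write $\xi=\bar U(X)$, $\eta=\bar V(X)$, $\zeta=\bar W(X)$, and abbreviate $\D\bar V(X)[\xi]$ as $\D_\xi\bar V$, and similarly for the other fields.

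First we expand the three derivative terms using the product rule for $\tr(\bar V^\top G\bar W)$ stated just after~\eqref{eq:Koszul}. We then expand the three bracket terms using the formula above. The terms containing derivatives of the vector fields should cancel in pairs, leaving exactly $2\langle \D_\xi\bar V,\zeta\rangle_X$. For instance, $\langle\D_\xi\bar V,\zeta\rangle$ comes from $\bar U\langle\bar V,\bar W\rangle$ and once more from $\langle[\bar U,\bar V],\bar W\rangle$, while its counterpart $\langle\D_\xi\bar W,\eta\rangle$ cancels against $-\langle[\bar V,\bar W],\bar U\rangle$ and the other terms. Carrying out this bookkeeping, which relies on the symmetry of $G_X$, is routine: it is the same computation as in the Euclidean case, and we will verify it by listing all six derivative terms with their signs.

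What survives are the three metric-derivative terms
\[
\tr(\eta^\top \D G(X)[\xi]\zeta)+\tr(\zeta^\top\D G(X)[\eta]\xi)-\tr(\xi^\top\D G(X)[\zeta]\eta).
\]
Since $\D G(X)[\cdot]$ is symmetric, the first two terms equal $\langle G_X^{-1}\D G(X)[\xi]\eta,\zeta\rangle_X$ and $\langle G_X^{-1}\D G(X)[\eta]\xi,\zeta\rangle_X$. For the third we use $\tr(\xi^\top K\eta)=\tr(K\eta\xi^\top)=\tr(K\sym(\xi\eta^\top))$ with $K=\D G(X)[\zeta]$ symmetric, and the definition of the adjoint then gives $\langle \D G(X)^*[\sym(\xi\eta^\top)],\zeta\rangle_X$.

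Collecting the pieces yields $2\langle\bar\nabla_{\bar U}\bar V,\zeta\rangle_X = \langle 2\D_\xi\bar V + G_X^{-1}(\D G(X)[\xi]\eta+\D G(X)[\eta]\xi)-\D G(X)^*[\sym(\xi\eta^\top)],\zeta\rangle_X$. Because $\zeta=\bar W(X)$ can be any element of $\R^{n\times p}$ (take constant fields, for example), nondegeneracy of the metric gives the claimed formula. The expression also confirms that $\Gamma_X(\xi,\eta)$ depends only on $\xi$ and $\eta$ and is symmetric.

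The main obstacle is the sign bookkeeping in the cancellation of the vector-field-derivative terms and keeping the orientation of the transposes straight in the third metric term; the remaining steps are direct.
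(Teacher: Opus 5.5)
Your proposal is correct and follows the paper's route. Both arguments evaluate Koszul's formula, reduce it to the three metric-derivative terms, and identify those terms via the symmetry of $G_X$ and $\D G(X)[\cdot]$ and the adjoint $\D G(X)^*$.

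The only difference is in how the field-derivative terms are handled. The paper takes $\bar U\equiv\xi$ and $\bar V\equiv\eta$ (and finally $\bar W\equiv\zeta$) constant, so those terms and the bracket $[\bar U,\bar V]$ drop out at once; this relies on $\Gamma_X$ depending only on $\xi,\eta$. Your bookkeeping with general fields proves that dependence as a by-product. One small correction there: $\langle \D_\xi\bar W,\eta\rangle_X$ cancels against the term coming from $\langle[\bar W,\bar U],\bar V\rangle_X$, not from $-\langle[\bar V,\bar W],\bar U\rangle_X$.
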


\begin{proof}
To compute $\Gamma_X(\xi, \eta)$, we specifically choose constant vector fields $\tilde{U} \equiv \xi$ and $\tilde{V} \equiv \eta$.
Then, we have
\begin{equation}
(\bar{\nabla}_{\tilde{U}}\tilde{V})(X) = \D \tilde{V}(X)[\xi] + \Gamma_X(\xi, \eta) = \Gamma_X(\xi, \eta),
\end{equation}
and $[\bar{W}, \tilde{U}](X) = -\D \bar{W}(X)[\xi]$ yields
\begin{align}
&\tilde{U}\langle\tilde{V}, \bar{W}\rangle_X + \langle[\bar{W}, \tilde{U}], \tilde{V}\rangle_X\\
={}& \tr(\eta^{\top}\D G(X)[\xi]\bar{W}(X) + \eta^{\top}G_X \D \bar{W}(X)[\xi]) - \tr(\D \bar{W}(X)[\xi]^{\top}G_X\eta)\\
={}& \tr(\eta^{\top}\D G(X)[\xi]\bar{W}(X)).
\end{align}
Similarly, $[\tilde{V}, \bar{W}](X) = \D \bar{W}(X)[\eta]$ yields
\begin{align}
&\tilde{V}\langle \bar{W}, \tilde{U}\rangle_X - \langle[\tilde{V}, \bar{W}], \tilde{U}\rangle_X\\
={}& \tr(\D \bar{W}(X)[\eta]^{\top}G_X\xi + \bar{W}(X)^{\top}\D G(X)[\eta]\xi) - \tr(\D \bar{W}(X)[\eta]^{\top}G_X\xi)\\
={}& \tr(\bar{W}(X)^{\top}\D G(X)[\eta]\xi).
\end{align}
Furthermore, we have $[\tilde{U}, \tilde{V}] = 0$.
Taking all of them into account, it follows from Koszul's formula~\eqref{eq:Koszul} that
\begin{equation}
2\langle \Gamma_X(\xi, \eta), \bar{W}\rangle_X = \tr(\eta^{\top}\D G(X)[\xi]\bar{W}(X)) + \tr(\bar{W}(X)^{\top}\D G(X)[\eta]\xi) -\tr(\xi^{\top}\D G(X)[\bar{W}(X)]\eta).
\end{equation}
This identity implies that the right-hand side, and hence the left-hand side, depend on $\bar{W}$ only at the value $\bar{W}(X)$ at $X$.
Therefore, for arbitrary $\zeta \in \R^{n \times p}$, we set $\bar{W}$ as a constant vector field $\tilde{W} \equiv \zeta$ to obtain
\begin{align}
2\langle \Gamma_X(\xi, \eta), \zeta\rangle_X &= \tr(\eta^{\top}\D G(X)[\xi]\zeta) + \tr(\zeta^{\top}\D G(X)[\eta]\xi) - \tr(\xi^{\top}\D G(X)[\zeta]\eta)\\
&= \tr(\eta^{\top}\D G(X)[\xi]G_X^{-1}G_X\zeta + \zeta^{\top}G_XG_X^{-1}\D G(X)[\eta]\xi) - \tr(\eta\xi^{\top} \D G(X)[\zeta])\\
&= \langle G_X^{-1}\D G(X)[\xi]\eta + G_X^{-1} \D G(X)[\eta]\xi, \zeta\rangle_X - \tr(\sym(\xi\eta^{\top}) \D G(X)[\zeta])\\
&= \langle G_X^{-1}(\D G(X)[\xi] \eta + \D G(X)[\eta]\xi) - \D G(X)^* [\sym(\xi\eta^{\top})], \zeta\rangle_X. \label{eq:IdentifyingGamma}
\end{align}
Since \eqref{eq:IdentifyingGamma} holds for any $\zeta \in \R^{n \times p}$, the Christoffel function $\Gamma_X$ is written out as
\begin{equation}
\Gamma_X(\xi, \eta) = \frac{1}{2}(G_X^{-1}(\D G(X)[\xi]\eta + \D G(X)[\eta]\xi) - \D G(X)^*[\sym(\xi\eta^{\top})]),
\end{equation}
completing the proof.
\end{proof}

\subsection{The derivatives of metric matrices and their adjoint}
To further clarify the expression of the Levi-Civita connection, we compute the derivative $\D G(X)$ and its adjoint $\D G(X)^*$ for both cases $G_X = G_X^{(1)}, G_X^{(2)}$.
As in Proposition~\ref{prop:G_summary}, we define $M_X \coloneqq (X^{\top}X)^{-1}$, $\Pi_X = I_n - XM_X X^{\top}$, and $S_X \coloneqq A - AXJX^{\top}A$.
Let $G^{(i)} \colon \E \to \Sym_{++}(n)$ be a map defined by $G^{(i)}(X) \coloneqq G_X^{(i)}$ for $i = 1, 2$.
Then, their derivatives can be computed as
\begin{align}
\D G^{(1)}_X[\zeta] = &\frac{1}{\rho}A(\zeta X^{\top} + X\zeta^{\top})A -A(\zeta JX^{\top} + XJ\zeta^{\top})AS_X - S_X A(\zeta JX^{\top} + XJ\zeta^{\top})A\\
=& \frac{2}{\rho}A\sym(X\zeta^{\top})A - 4\sym(A\sym(XJ\zeta^{\top})AS_X)
\end{align}
and
\begin{align}
\D G^{(2)}_X[\zeta] = &\frac{1}{\rho}A(\zeta X^{\top} + X\zeta^{\top})A -\zeta M_XX^{\top} + XM_X(\zeta^{\top}X + X^{\top}\zeta)M_XX^{\top} -XM_X\zeta^{\top}\\
=& \frac{2}{\rho}A\sym(X\zeta^{\top})A - 2\sym(XM_X\zeta^{\top}) + 2XM_X\sym(\zeta^{\top}X)M_XX^{\top}.
\end{align}

To compute their adjoint, we take the inner product of $\D G^{(i)}(X)[\zeta]$ and an arbitrary $H \in \Sym(n)$, and compute it as follows, noting the relationship $\tr(\sym(B)H) = \tr(BH)$ for arbitrary $B \in \R^{n \times n}$:
For the inner product~\eqref{eq:metric_opensubmanifold} with $G \coloneqq G^{(1)}$, we have
\begin{align}
& \tr(\D G^{(1)}(X)[\zeta]H) \\
={}& \frac{2}{\rho}\tr(X\zeta^{\top}AHA) - 4\tr(A\sym(XJ\zeta^{\top})AS_X H)\\
={}& \frac{2}{\rho}\tr(X\zeta^{\top}AHA) - 4\tr(\sym(XJ\zeta^{\top})\sym(AS_X HA))\\
={}& \frac{2}{\rho}\tr(X\zeta^{\top}AHA) - 4\tr(XJ\zeta^{\top}\sym(AS_X HA))\\
={}& \frac{2}{\rho}\tr(\zeta^{\top}G_X^{(1)}{G_X^{(1)}}^{-1}AHAX) - 4\tr(\zeta^{\top}G_X^{(1)}{G_X^{(1)}}^{-1}\sym(AS_X HA)XJ)\\
={}& \bigg\langle\zeta, {G_X^{(1)}}^{-1}\bigg(\frac{2}{\rho}AHAX - 4\sym(AS_X HA)XJ\bigg)\bigg\rangle_X
\end{align}
and for~\eqref{eq:metric_opensubmanifold} with $G \coloneqq G^{(2)}$, we have
\begin{align}
&\tr(\D G^{(2)}(X)[\zeta]H) \\
={}& \frac{2}{\rho}\tr(X\zeta^{\top}AHA) - 2\tr(XM_X \zeta^{\top}H)+ 2\tr(XM_X \sym(\zeta^{\top}X)M_X X^{\top}H)\\
={}& \frac{2}{\rho}\tr(X\zeta^{\top}AHA) - 2\tr(XM_X \zeta^{\top}H)+ 2\tr(\zeta^{\top}XM_X X^{\top}HXM_X )\\
={}& \frac{2}{\rho}\tr(\zeta^{\top}G_X^{(2)}{G_X^{(2)}}^{-1}AHAX) \\
& - 2\tr(\zeta^{\top}G_X^{(2)}{G_X^{(2)}}^{-1}HXM_X ) + 2\tr(\zeta^{\top}G_X^{(2)}{G_X^{(2)}}^{-1}XM_X X^{\top}HXM_X )\\
={}& \bigg\langle \zeta, {G_X^{(2)}}^{-1}\bigg(\frac{2}{\rho}AHAX - 2HXM_X  + 2XM_X X^{\top}HXM_X \bigg)\bigg\rangle_X.
\end{align}
Therefore, we obtain
\begin{equation}
\D G^{(1)}(X)^*[H] = {G_X^{(1)}}^{-1}\bigg(\frac{2}{\rho}AHAX - 4\sym(AS_X HA)XJ\bigg)
\end{equation}
and
\begin{equation}
\D G^{(2)}(X)^*[H] = {G_X^{(2)}}^{-1}\bigg(\frac{2}{\rho}AHAX - 2HXM_X  + 2XM_X X^{\top}HXM_X \bigg).
\end{equation}

\subsection{Levi-Civita connection on the indefinite Stiefel manifold}
Before proceeding to computing the Levi-Civita connection on the indefinite Stiefel manifold $\iSt_{A,J}(p,n)$, we further elaborate on $\Gamma_X$.
In what follows, we assume that $X \in \iSt_{A,J}(p,n)$, from which we can use some additional properties such as $X^{\top}AX = J$ and $\Pi_X X = S_X X = 0$.

Since $\Gamma_X$ depends on the choice of $G$, we denote $\Gamma_X$ by $\Gamma_X^{(i)}$ when $G \coloneqq G^{(i)}$ for $i = 1, 2$.
Then, we have
\begin{align}
\Gamma^{(1)}_X(\xi, \eta) ={}& {G^{(1)}_X}^{-1}(\rho^{-1}A\sym(X\xi^{\top})A-2\sym(A\sym(XJ\xi^{\top})AS_X))\eta\\
&+ {G^{(1)}_X}^{-1}(\rho^{-1}A\sym(X\eta^{\top})A-2\sym(A\sym(XJ\eta^{\top})AS_X))\xi\\
&- {G^{(1)}_X}^{-1}(\rho^{-1}A\sym(\xi\eta^{\top})AX - 2\sym(AS_X\sym(\xi\eta^{\top})A)XJ)\\
={}& {G^{(1)}_X}^{-1}(\rho^{-1}AB'_{X,\xi,\eta}
-2C_{X,\xi,\eta}),
\end{align}
where we have defined $B'_{X, \xi, \eta}, C_{X,\xi,\eta} \in \R^{n \times p}$ as
\begin{equation}
\label{eq:B0}
B'_{X,\xi,\eta} \coloneqq \sym(X\xi^{\top})A\eta + \sym(X\eta^{\top})A\xi - \sym(\xi\eta^{\top})AX
\end{equation}
and
\begin{align}
C_{X, \xi, \eta} \coloneqq{}& \sym(A\sym(XJ\xi^{\top})AS_X)\eta + \sym(A\sym(XJ\eta^{\top})AS_X)\xi - \sym(AS_X\sym(\xi\eta^{\top})A)XJ.
\label{eq:C}
\end{align}
Similarly, regarding $\Gamma^{(2)}_X$,
\begin{align}
\Gamma^{(2)}_X(\xi,\eta) ={}& {G_X^{(2)}}^{-1}(\rho^{-1}A\sym(X\xi^{\top})A - \sym(XM_X\xi^{\top})  + XM_X\sym(\xi^{\top}X)M_X X^{\top})\eta\\
&+ {G^{(2)}_X}^{-1}(\rho^{-1}A\sym(X\eta^{\top})A - \sym(XM_X\eta^{\top})  + XM_X\sym(\eta^{\top}X)M_X X^{\top})\xi\\
&- {G^{(2)}_X}^{-1}(\rho^{-1}A\sym(\xi\eta^{\top})AX-\sym(\xi\eta^{\top})XM_X + XM_X X^{\top}\sym(\xi\eta^{\top})XM_X)\\
={}& {G^{(2)}_X}^{-1}(\rho^{-1}AB'_{X,\xi,\eta} - D_{X,\xi,\eta} + XM_X E_{X, \xi,\eta}),
\end{align}
where we have defined $D_{X, \xi, \eta} \in \R^{n \times p}$ and $E_{X,\xi,\eta} \in \R^{p \times p}$ as
\begin{equation}
\label{eq:D}
D_{X,\xi,\eta} \coloneqq \sym(XM_X\xi^{\top})\eta + \sym(XM_X\eta^{\top})\xi - \sym(\xi\eta^{\top})XM_X\end{equation}
and
\begin{equation}
E_{X,\xi,\eta} \coloneqq \sym(\xi^{\top}X)M_X X^{\top}\eta + \sym(\eta^{\top}X)M_X X^{\top}\xi - X^{\top}\sym(\xi\eta^{\top})XM_X.   
\end{equation}

Furthermore, using Proposition~\ref{prop:G_summary}, it follows from~\eqref{eq:inverse1_2} and~\eqref{eq:inverse2_2} that
\begin{align}
\Gamma^{(1)}_X(\xi, \eta) &= (\rho XX^{\top} + A^{-1}\Pi_X A^{-1})(\rho^{-1}AB'_{X,\xi,\eta} - 2C_{X,\xi,\eta})\\
&= \frac{1}{\rho}A^{-1}\Pi_XB'_{X,\xi,\eta} -2 \rho XX^{\top}C_{X,\xi,\eta}+XX^{\top}AB'_{X,\xi,\eta} -2 A^{-1}\Pi_X A^{-1}C_{X,\xi,\eta} \label{eq:Gamma1}
\end{align}
and
\begin{align}
\Gamma^{(2)}_X(\xi,\eta)
={}& (\rho XX^{\top} + A^{-1}S_X^2A^{-1})(\rho^{-1}AB'_{X,\xi,\eta} - D_{X,\xi,\eta} + XM_X E_{X,\xi,\eta})\\
={}& \frac{1}{\rho}A^{-1}S_X^2B'_{X,\xi,\eta} + \rho X(-X^{\top}D_{X,\xi,\eta} + E_{X,\xi,\eta})\\
&+ XX^{\top}AB'_{X,\xi,\eta} +A^{-1}S_X(I_n - AXJX^{\top})(-D_{X,\xi,\eta} + XM_XE_{X,\xi,\eta}), \label{eq:Gamma2}
\end{align}
respectively.
To simplify them, we note the following lemma.
\begin{lemma}
For $X \in \iSt_{A,J}(p,n)$ and $\xi, \eta \in T_X\!\iSt_{A,J}(p,n)$, let
\begin{equation}
B_{X,\xi,\eta} \coloneqq \xi X^{\top}A\eta + \eta X^{\top}A\xi.
\end{equation}
Then, it holds that
\begin{equation}
B'_{X,\xi,\eta} = X\sym(\xi^{\top}A\eta) + B_{X,\xi,\eta}, \quad \Pi_X B'_{X,\xi,\eta} = \Pi_X B_{X,\xi,\eta}, \quad S_X^2 B'_{X,\xi,\eta} = S_X^2 B_{X,\xi,\eta}.
\end{equation}
Furthermore,
\begin{equation}
X^{\top}AB'_{X,\xi,\eta} = J\sym(\xi^{\top}A\eta) + 2\sym(X^{\top}A\xi X^{\top}A\eta)
\end{equation}
holds.
Regarding $D_{X,\xi, \eta}$ and $E_{X,\xi,\eta}$,
\begin{equation}
\label{eq:D_2}
D_{X,\xi,\eta} = XM_X\sym(\xi^{\top}\eta) + \xi\skew(M_X X^{\top}\eta) + \eta\skew(M_X X^{\top}\xi),
\end{equation}
\begin{equation}
-X^{\top}D_{X,\xi,\eta} + E_{X,\xi,\eta} = -\sym(\xi^{\top}\Pi_X \eta),
\end{equation}
\begin{equation}
A^{-1}S_X (-D_{X,\xi,\eta} + XM_X E_{X,\xi,\eta}) = -A^{-1}S_X(\xi\skew(M_XX^{\top}\eta) + \eta\skew(M_XX^{\top}\xi))
\end{equation}
hold.
\end{lemma}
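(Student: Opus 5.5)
The whole lemma is a direct computation. The only structural input is the tangent-space characterization: $\xi,\eta\in T_X\!\iSt_{A,J}(p,n)$ means $X^{\top}A\xi$ and $X^{\top}A\eta$ are skew-symmetric, i.e., $\xi^{\top}AX=-X^{\top}A\xi$. Together with $X^{\top}AX=J$, $J^2=I_p$, and \eqref{eq:PiXSX}, this drives every step. I will expand each $\sym(\cdot)$ explicitly and regroup.

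\emph{The identities for $B'$.} Expanding the definition \eqref{eq:B0} gives
\begin{equation}
2B'_{X,\xi,\eta}=X\xi^{\top}A\eta+\xi X^{\top}A\eta+X\eta^{\top}A\xi+\eta X^{\top}A\xi-\xi\eta^{\top}AX-\eta\xi^{\top}AX .
\end{equation}
Next I will use skewness: $-\xi\eta^{\top}AX=\xi X^{\top}A\eta$ and $-\eta\xi^{\top}AX=\eta X^{\top}A\xi$. The right-hand side then becomes $X(\xi^{\top}A\eta+\eta^{\top}A\xi)+2B_{X,\xi,\eta}$, which is $2X\sym(\xi^{\top}A\eta)+2B_{X,\xi,\eta}$ since $A$ is symmetric. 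Multiplying by $\Pi_X$ or $S_X^2$ kills the $X$ term because $\Pi_XX=S_XX=0$. For the last identity I multiply by $X^{\top}A$:
\begin{equation}
X^{\top}AB_{X,\xi,\eta}=K_\xi K_\eta+K_\eta K_\xi=2\sym(K_\xi K_\eta),
\end{equation}
where $K_\xi\coloneqq X^{\top}A\xi$. This uses $(K_\xi K_\eta)^{\top}=K_\eta K_\xi$ for skew $K$'s. Combined with $X^{\top}AX=J$, it gives the stated formula.

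\emph{The identities for $D$ and $E$.} These need no tangency. In \eqref{eq:D} I will expand $2\sym(XM_X\xi^{\top})\eta=XM_X\xi^{\top}\eta+\xi M_XX^{\top}\eta$, and similarly the other terms. The $-\sym(\xi\eta^{\top})XM_X$ term contributes $-\tfrac12(\xi\eta^{\top}XM_X+\eta\xi^{\top}XM_X)$. Collecting the coefficient of $\xi$ gives $\tfrac12(M_XX^{\top}\eta-\eta^{\top}XM_X)=\skew(M_XX^{\top}\eta)$, using symmetry of $M_X$; the same holds for $\eta$. The $X$ terms give $XM_X\sym(\xi^{\top}\eta)$, which yields \eqref{eq:D_2}.

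I then compute $X^{\top}D$ from \eqref{eq:D_2}, using $X^{\top}XM_X=I_p$, and expand $E$ likewise. The difference should collapse to $-\sym(\xi^{\top}\eta)+\sym(\xi^{\top}XM_XX^{\top}\eta)=-\sym(\xi^{\top}\Pi_X\eta)$. This is the bookkeeping step where the transposed and skew pieces must be matched carefully. I expect it to be the main obstacle, and I would write $E$ out termwise with $N_\xi\coloneqq M_XX^{\top}\xi$ to organize the cancellation.

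\emph{The final identity.} Substituting \eqref{eq:D_2}, the $XM_X$ pieces are $-XM_X\sym(\xi^{\top}\eta)+XM_XE_{X,\xi,\eta}$. These are annihilated by $S_X$ since $S_XX=0$, leaving $-A^{-1}S_X(\xi\skew(M_XX^{\top}\eta)+\eta\skew(M_XX^{\top}\xi))$ as claimed.
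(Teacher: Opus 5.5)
Your proposal is correct and follows essentially the same route as the paper. Both expand each $\sym(\cdot)$ term by term using skewness of $X^{\top}A\xi$, $X^{\top}AX=J$, $\Pi_XX=S_XX=0$ and $X^{\top}XM_X=I_p$; your last step, which kills the $XM_X$ terms directly via $S_XX=0$, is a more direct version of the paper's detour through $\Pi_X+XM_XX^{\top}$.

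The bookkeeping you deferred for $-X^{\top}D_{X,\xi,\eta}+E_{X,\xi,\eta}$ does close as you predicted. With $N_\xi\coloneqq M_XX^{\top}\xi$ one gets $E_{X,\xi,\eta}=\sym(\xi^{\top}XN_\eta)+X^{\top}\xi\skew(N_\eta)+X^{\top}\eta\skew(N_\xi)$, and its skew pieces cancel exactly against those in $X^{\top}D_{X,\xi,\eta}=\sym(\xi^{\top}\eta)+X^{\top}\xi\skew(N_\eta)+X^{\top}\eta\skew(N_\xi)$.
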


\begin{proof}
We have
\begin{align}
B'_{X,\xi,\eta} &= \frac{1}{2}(X\xi^{\top}A\eta + \xi X^{\top}A\eta + X\eta^{\top}A\xi + \eta X^{\top}A\xi - \xi\eta^{\top}AX - \eta\xi^{\top}AX)\\
&= \frac{1}{2}(X(\xi^{\top}A\eta + \eta^{\top}A\xi) + \xi(X^{\top}A\eta - \eta^{\top}AX) + \eta(X^{\top}A\xi - \xi^{\top}AX))\\
&= X\sym(\xi^{\top}A\eta) + \xi \skew(X^{\top}A\eta) + \eta \skew(X^{\top}A\xi)\\
&= X \sym(\xi^{\top}A\eta) + B_{X,\xi,\eta},
\end{align}
where we note that $\skew(X^{\top}A\eta) = X^{\top}A\eta$ and $\skew(X^{\top}A\xi) = X^{\top}A\xi$ since $\eta$ and $\xi$ are tangent at $X$.
It follows from $\Pi_XX = 0$ and $S_X X = 0$ that
$\Pi_X B'_{X,\xi,\eta} = \Pi_X B_{X,\xi,\eta}$
and
$S_X^2 B'_{X,\xi,\eta} = S_X^2 B_{X,\xi,\eta}$.
Furthermore, we have
\begin{align}
X^{\top}AB'_{X,\xi,\eta} &= X^{\top}AX\sym(\xi^{\top}A\eta) + X^{\top}A\xi X^{\top}A\eta + X^{\top}A\eta X^{\top}A\xi\\
&= J \sym(\xi^{\top}A\eta) + (X^{\top}A\xi)(X^{\top}A\eta) + (-X^{\top}A\eta)^{\top}(-X^{\top}A\xi)^{\top}\\
&= J\sym(\xi^{\top}A\eta) + 2\sym(X^{\top}A\xi X^{\top}A\eta).
\end{align}
Regarding $D_{X,\xi,\eta}$,
\begin{align}
D_{X,\xi,\eta} &=
\frac{1}{2}(XM_X\xi^{\top}\eta + \xi M_X X^{\top}\eta + XM_X \eta^{\top}\xi + \eta M_X X^{\top}\xi - \xi \eta^{\top}XM_X - \eta\xi^{\top}XM_X)\\
&= XM_X \sym(\xi^{\top}\eta) + \xi\skew(M_X X^{\top}\eta) + \eta\skew(M_X X^{\top}\xi)
\end{align}
holds.
Using $X^{\top}XM_X = I_p$, we have
\begin{align}
&-X^{\top}D_{X,\xi,\eta} + E_{X,\xi,\eta}\\
={}& -\sym(\xi^{\top}\eta) - \frac{1}{2} (X^{\top}\xi M_X X^{\top}\eta + X^{\top}\eta M_X X^{\top}\xi) + X^{\top}\sym(\xi\eta^{\top})XM_X\\
& + \frac{1}{2}(\xi^{\top}XM_X X^{\top}\eta + X^{\top}\xi M_X X^{\top}\eta + \eta^{\top}XM_X X^{\top}\xi + X^{\top}\eta M_XX^{\top}\xi) - X^{\top}\sym(\xi\eta^{\top})XM_X\\
={}& -\sym(\xi^{\top}\eta) + \sym(\xi^{\top}XM_X X^{\top}\eta)\\
={}& -\sym(\xi^{\top}\Pi_X \eta).
\end{align}
Finally, it follows from $-D_{X,\xi,\eta} + XM_X E_{X,\xi,\eta} = (\Pi_X + XM_XX^{\top})(-D_{X,\xi,\eta} + XM_X E_{X,\xi,\eta})$, $S_X X = 0$, and $S_X\Pi_X = S_X(I_n - XM_XX^{\top}) = S_X$ that
\begin{align}
A^{-1}S_X(-D_{X,\xi,\eta} + XM_X E_{X,\xi,\eta}) &= A^{-1}S_X(\Pi_X + XM_XX^{\top})(-D_{X,\xi,\eta} + XM_X E_{X,\xi,\eta})\\
&= A^{-1}S_X(-D_{X,\xi,\eta} + XM_X E_{X,\xi,\eta})\\
&= -A^{-1}S_X D_{X,\xi,\eta}\\
&= -A^{-1}S_X(\xi\skew(M_X X^{\top}\eta) + \eta\skew(M_XX^{\top}\xi)).
\end{align}
This completes the proof.
\end{proof}
\begin{remark}
In~\eqref{eq:D}, computing each of $\sym(XM_X\xi^{\top})$, $\sym(XM_X \eta^{\top})$, and $\sym(\xi\eta^{\top})$ costs $O(n^2p)$, whereas computing the whole of \eqref{eq:D_2} costs $O(np^2)$.
Therefore, using~\eqref{eq:D_2} reduces the computational cost.
This reduction is particularly effective in the usual case $p \ll n$.
\end{remark}

Owing to this lemma, $\Gamma^{(1)}_X$ in~\eqref{eq:Gamma1} and $\Gamma^{(2)}_X$ in~\eqref{eq:Gamma2} can be further simplified as follows:
\begin{align}
\Gamma^{(1)}_X(\xi, \eta) ={}& \frac{1}{\rho}A^{-1}\Pi_X B_{X,\xi,\eta} - 2\rho XX^{\top}C_{X,\xi,\eta}\\
&+ XJ\sym(\xi^{\top}A\eta) + 2X\sym(X^{\top}A\xi X^{\top}A\eta) - 2A^{-1}\Pi_X A^{-1}C_{X,\xi,\eta} \notag\\
={}& A^{-1}\Pi_X(\rho^{-1}B_{X,\xi,\eta} - 2A^{-1}C_{X,\xi,\eta}) \\
&+2X(-\rho X^{\top}C_{X,\xi,\eta} + \sym(X^{\top}A\xi X^{\top}A\eta)) + XJ\sym(\xi^{\top}A\eta)
\end{align}
and
\begin{align}
\Gamma^{(2)}_{X}(\xi, \eta) ={}& \frac{1}{\rho}A^{-1}S_X^2 B_{X,\xi,\eta} - \rho X \sym(\xi^{\top}\Pi_X \eta) + XJ\sym(\xi^{\top}A\eta) + 2X\sym(X^{\top}A\xi X^{\top}A\eta)\\
& - A^{-1}S_XD_{X,\xi,\eta} + A^{-1}S_X AXJ \sym(\xi^{\top}\Pi_X \eta)\\
={}& A^{-1}S_X(\rho^{-1}S_X B_{X,\xi,\eta}  - D_{X,\xi,\eta} + AXJ\sym(\xi^{\top}\Pi_X \eta)) \\
&+ X(-\rho \sym(\xi^{\top}\Pi_X\eta) + 2\sym(X^{\top}A\xi X^{\top}A\eta)) + XJ\sym(\xi^{\top}A\eta).
\end{align}

We are in a position to compute the Levi-Civita connection $\nabla$ on the indefinite Stiefel manifold $\iSt_{A,J}(p,n)$.
Since $\iSt_{A,J}(p,n)$ is a Riemannian submanifold of $\E$, we have
\begin{equation}
(\nabla_U V)(X) = P_X^{G}((\bar{\nabla}_{\bar{U}} \bar{V})(X)) = P^{G}_X(\D \bar{V}(X)[\xi] + \Gamma_X(\xi,\eta)),
\end{equation}
where $\bar{U}, \bar{V} \in \mathfrak{X}(\E)$ are smooth extensions of $U, V \in \mathfrak{X}(\iSt_{A,J}(p,n))$ to $\E$.
The following lemma provides a detailed description of $P^{G}_X(\Gamma_X(\xi,\eta))$.
\begin{lemma}
\label{lem:PGamma}
For $X \in \iSt_{A,J}(p,n)$ and $\xi, \eta \in T_X \! \iSt_{A,J}(p,n)$, let $\Omega_{X,\xi} \coloneqq X^{\top}A \xi, \Omega_{X, \eta} \coloneqq X^{\top}A\eta \in \Skew(p)$.
Then, it holds that
\begin{align}
P_X^{G^{(1)}}(\Gamma^{(1)}_X(\xi,\eta)) ={}& A^{-1}\Pi_X(\rho^{-1}B_{X,\xi,\eta} - 2A^{-1}C_{X,\xi,\eta}) \\
&+ 2P^{G^{(1)}}_X(X(-\rho X^{\top}C_{X,\xi,\eta}+\sym(\Omega_{X,\xi}\Omega_{X,\eta})))
\end{align}
and
\begin{align}
P_X^{G^{(2)}}(\Gamma^{(2)}_X(\xi,\eta)) ={}& A^{-1}S_X(\rho^{-1}S_X B_{X,\xi,\eta} - D_{X,\xi,\eta} + AXJ\sym(\xi^{\top}\Pi_X\eta))\\
& + P^{G^{(2)}}_X(X(-\rho \sym(\xi^{\top}\Pi_X \eta) + 2\sym(\Omega_{X,\xi}\Omega_{X,\eta}))).
\end{align}
\end{lemma}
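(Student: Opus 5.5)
We apply the simplified projection \eqref{eq:projection_simplified}, $P_X^{G^{(i)}}(Y) = Y - XJ\sym(X^{\top}AY)$, which is linear in $Y$, to each of the three groups of terms in the simplified expressions of $\Gamma^{(1)}_X$ and $\Gamma^{(2)}_X$ obtained just before the lemma. Each of these expressions has the form (a term of the form $A^{-1}(\cdot)$ with a left factor $\Pi_X$ or $S_X$) plus (a term $X(\cdots)$) plus $XJ\sym(\xi^{\top}A\eta)$. The statement leaves the middle term inside $P^{G^{(i)}}_X$, so we only need two facts. First, the first group is already tangent, so the projection fixes it. Second, the projection annihilates $XJ\sym(\xi^{\top}A\eta)$. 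Before doing this, we rewrite $\sym(X^{\top}A\xi X^{\top}A\eta)$ as $\sym(\Omega_{X,\xi}\Omega_{X,\eta})$; this is a notational change only.

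For the last group, set $Y = XJW$ with $W = \sym(\xi^{\top}A\eta) \in \Sym(p)$. Then $X^{\top}AY = JJW = W$, which is symmetric. Hence
$$P_X^{G^{(i)}}(Y) = XJW - XJW = 0.$$
This holds for both $i = 1, 2$.

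For the first group, it suffices to show that $X^{\top}AY = 0$ whenever $Y = A^{-1}\Pi_X Z$ or $Y = A^{-1}S_X Z$, since then $\sym(X^{\top}AY) = 0$ and $P_X^{G^{(i)}}(Y) = Y$.
\begin{itemize}
\item In the first case, $X^{\top}AY = X^{\top}\Pi_X Z = (\Pi_X X)^{\top}Z = 0$, using that $\Pi_X$ is symmetric and $\Pi_X X = 0$ by \eqref{eq:PiXSX}.
\item In the second case, $X^{\top}AY = X^{\top}S_X Z = (S_X X)^{\top}Z = 0$, again by \eqref{eq:PiXSX} and the symmetry of $S_X$.
\end{itemize}
Both cases therefore follow directly from Proposition~\ref{prop:G_summary}. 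We apply this with $Z = \rho^{-1}B_{X,\xi,\eta} - 2A^{-1}C_{X,\xi,\eta}$ for $i = 1$, and with $Z = \rho^{-1}S_X B_{X,\xi,\eta} - D_{X,\xi,\eta} + AXJ\sym(\xi^{\top}\Pi_X\eta)$ for $i = 2$.

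Adding the three pieces by linearity gives the stated formulas. For $i = 1$, the factor $2$ in front of the middle term is pulled outside the projection. We do not expect a real obstacle here. The only point needing care is bookkeeping: making sure the middle $X(\cdots)$ term is grouped exactly as in the simplified $\Gamma^{(i)}_X$ (including the factor $2$ and the $-\rho$ terms). That term is deliberately left unprojected, because its argument $X^{\top}A$ applied to it gives $J(\cdots)$, which is not symmetric in general, so the projection does not simplify it.
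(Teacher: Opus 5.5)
Your proposal is correct and follows essentially the same route as the paper. The paper likewise shows that $P_X^{G^{(i)}}(XJ\Sigma)=0$ for symmetric $\Sigma$, since $X^{\top}AXJ\Sigma=\Sigma$, and that $P_X^{G^{(i)}}$ fixes $A^{-1}\Pi_X K$ and $A^{-1}S_X K$ because $X^{\top}\Pi_X = X^{\top}S_X = 0$, and then concludes by linearity from the simplified expressions for $\Gamma^{(i)}_X$.
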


\begin{proof}
For $\Sigma \in \Sym(p)$, $P_X^{G^{(i)}}(XJ\Sigma) = XJ\Sigma - XJ\sym(X^{\top}AXJ\Sigma) = XJ\Sigma - XJ\sym(J^2\Sigma) = XJ(\Sigma - \sym(\Sigma)) = {}  0$ holds for $i = 1, 2$.
Furthermore, for any $K \in \R^{n \times p}$, we have $P_X^{G^{(i)}}(A^{-1}\Pi_X K) = A^{-1}\Pi_X K - XJ\sym(X^{\top}\Pi_X K) = A^{-1}\Pi_X K$ and similarly $P_X^{G^{(i)}}(A^{-1}S_X K) = A^{-1}S_X K$ from $X^{\top}\Pi_X = X^{\top}S_X = 0$.
The conclusion is then straightforward.
\end{proof}

\subsection{Riemannian Hessian of a function on the indefinite Stiefel manifold}
Consider a smooth function $f \colon \iSt_{A,J}(p,n) \to \R$ and its smooth extension $\bar{f} \colon \E \to \R$ to the ambient manifold $\E$.
The Riemannian Hessian of $f$ is defined via the Levi-Civita connection $\nabla$ as
\begin{equation}
\Hess f(X)[\xi] \coloneqq \nabla_{\xi}\grad f
\end{equation}
for $X \in \iSt_{A,J}(p,n)$ and $\xi \in T_X\!\iSt_{A,J}(p,n)$.
Noting \eqref{eq:Rgrad}, we define $\bar{V} \in \mathfrak{X}(\E)$ as
\begin{equation}
\bar{V}(X) \coloneqq G_X^{-1}\grad_{\mathrm{E}} \bar{f}(X) - XJ\sym(X^{\top}AG_X^{-1}\grad_{\mathrm{E}} \bar{f}(X)),
\end{equation}
where $\bar{f}$ is a smooth extension of $f$ to $\E$.
Then, $\bar{V}|_{\iSt_{A,J}(p,n)} = \grad f$ holds.
Using the Leibniz rule, the formula $\D \inv (Y)[Z] = -Y^{-1}ZY^{-1}$ for the derivative of $\inv(Y) \coloneqq Y^{-1}$, and the chain rule, we have
\begin{align}
& \D \bar{V}(X)[\xi]\\
={}& -G_X^{-1}\D G(X)[\xi]G_X^{-1}\grad_{\mathrm{E}} \bar{f}(X) + G_X^{-1}\Hess_{\mathrm{E}} \bar{f}(X)[\xi] - \xi J \sym(X^{\top}AG_X^{-1}\grad_{\mathrm{E}} \bar{f}(X)) \\
&- XJ\sym((\xi^{\top}AG_X^{-1} - X^{\top}AG_X^{-1}\D G(X)[\xi]G_X^{-1})\grad_{\mathrm{E}} \bar{f}(X) + X^{\top}AG_X^{-1}\Hess_{\mathrm{E}} \bar{f}(X)[\xi])\\
={}& P_X^{G}(-G_X^{-1}\D G(X)[\xi]G_X^{-1} \grad_{\mathrm{E}} \bar{f}(X) + G_X^{-1}\Hess_{\mathrm{E}} \bar{f}(X)[\xi])\\
&-\xi J\sym(X^{\top}AG_X^{-1}\grad_{\mathrm{E}} \bar{f}(X)) - XJ\sym(\xi^{\top}AG_X^{-1}\grad_{\mathrm{E}} \bar{f}(X)),\label{eq:LastTerm}
\end{align}
where $\Hess_{\mathrm{E}}\bar{f}$ denotes the Euclidean Hessian of $\bar{f}$, i.e., $\Hess_{\mathrm{E}} \bar{f}(X)[\xi] = \D (\grad_{\mathrm{E}} \bar{f})(X)[\xi]$ holds.
Since $P^{G}_X(XJS) = XJS - XJ\sym(X^{\top}AXJS) = 0$ for any $S \in \Sym(p)$, the last term in~\eqref{eq:LastTerm} vanishes after applying $P_X^{G}$.
By using this fact and $(P^{G}_X)^2 = P^{G}_X$, the Riemannian Hessian can be computed as
\begin{align}
\Hess f(X)[\xi] ={}& \nabla_{\xi}\grad f\\
={}& P^{G}_X(\bar{\nabla}_{\xi} \bar{V})\\
={}& P^{G}_X(\D \bar{V}(X)[\xi] + \Gamma_X(\xi, \bar{V}(X)))\\
={}& P^{G}_X(-G_X^{-1} \D G(X)[\xi] G_X^{-1} \grad_{\mathrm{E}} \bar{f}(X) + G_X^{-1}\Hess_{\mathrm{E}} \bar{f}(X)[\xi] \\
&\qquad\quad - \xi J \sym(X^{\top}AG_X^{-1}\grad_{\mathrm{E}} \bar{f}(X)) + \Gamma_X(\xi, \bar{V}(X))).
\end{align}
At this stage, the formula is still not fully explicit for the two metrics because the projected Christoffel term remains to be specialized.

Since we have computed $P^{G^{(i)}}_X(\Gamma^{(i)}_X(\xi,\eta))$ for $i = 1, 2$ in Lemma~\ref{lem:PGamma}, we can further write out a specific expression of the Hessian.
From Proposition~\ref{prop:G_summary} and Lemma~\ref{lem:PGamma}, we obtain the specific formulas as follows.

{
\begin{proposition}
\label{prop:Riemannian_Hessian_formulas}
Let $f\colon\iSt_{A,J}(p,n)\to\R$ be a smooth function and $\bar f\colon\E\to\R$ be a smooth extension of $f$.
Let $X\in\iSt_{A,J}(p,n)$ and $\xi\in T_X\!\iSt_{A,J}(p,n)$.
Let $G_X^{(1)}$ and $G_X^{(2)}$ be the metric matrices defined in~\eqref{eq:G1_2} and~\eqref{eq:G2_2}, respectively.
Let $\Hess^{(i)}f$ denote the Riemannian Hessian of $f$ with respect to the metric induced by $G_X^{(i)}$ for $i\in\{1,2\}$.
We define $M_X\coloneqq(X^\top X)^{-1}$, $\Pi_X\coloneqq I_n-XM_XX^\top$, and $S_X\coloneqq A-AXJX^\top A$.
Furthermore, for $\zeta\in T_X\!\iSt_{A,J}(p,n)$, we set $\Omega_{X,\zeta}\coloneqq X^\top A\zeta\in\Skew(p)$, and for $\eta\in T_X\!\iSt_{A,J}(p,n)$, we set $B_{X,\xi,\eta}\coloneqq\xi X^\top A\eta+\eta X^\top A\xi$.
We also define
$C_{X,\xi,\eta}
\coloneqq
\sym\left(A\sym(XJ\xi^\top)AS_X\right)\eta+
\sym\left(A\sym(XJ\eta^\top)AS_X\right)\xi-
\sym\left(AS_X\sym(\xi\eta^\top)A\right)XJ$ and 
$D_{X,\xi,\eta}
\coloneqq
XM_X\sym(\xi^\top\eta)+
\xi\skew(M_XX^\top\eta)+
\eta\skew(M_XX^\top\xi)$.
Furthermore, for $i\in\{1,2\}$, we define \begin{equation}
\eta^{(i)}
\coloneqq\grad^{G^{(i)}}f(X)
=
{G_X^{(i)}}^{-1}\grad_{\mathrm E}\bar f(X)
-
XJ\sym\left(
X^\top A{G_X^{(i)}}^{-1}\grad_{\mathrm E}\bar f(X)
\right).
\end{equation}
Then, by using the orthogonal projection~\eqref{eq:projection_simplified}, the Riemannian Hessian associated with $G_X^{(1)}$ and $G_X^{(2)}$ are given as follows:
\begin{align}
\Hess^{(1)} f(X)[\xi]={}& A^{-1}\Pi_X(\rho^{-1}B_{X,\xi,\eta^{(1)}} - 2A^{-1}C_{X,\xi,\eta^{(1)}})\\
&+ P^{G^{(1)}}_X(2X(-\rho X^{\top}C_{X,\xi,\eta^{(1)}} + \sym(\Omega_{X,\xi}\Omega_{X,\eta^{(1)}}))\\
&\qquad\qquad -{G^{(1)}_X}^{-1} \D G^{(1)}(X)[\xi] {G^{(1)}_X}^{-1} \grad_{\mathrm{E}} \bar{f}(X) + {G^{(1)}_X}^{-1}\Hess_{\mathrm{E}} \bar{f}(X)[\xi]\notag \\
&\qquad \qquad - \rho\xi J \sym(JX^{\top}\grad_{\mathrm{E}} \bar{f}(X))),\label{eq:Hess_summary1}
\end{align}
\begin{align}
\Hess^{(2)} f(X)[\xi]={}& A^{-1}S_X(\rho^{-1}S_X B_{X,\xi,\eta^{(2)}} - D_{X,\xi,\eta^{(2)}} + AXJ\sym(\xi^{\top}\Pi_X\eta^{(2)}))\\
&+ P^{G^{(2)}}_X(X(-\rho \sym(\xi^{\top}\Pi_X \eta^{(2)}) + 2\sym(\Omega_{X,\xi}\Omega_{X,\eta^{(2)}}))\\
&\qquad\qquad -{G^{(2)}_X}^{-1} \D G^{(2)}(X)[\xi] {G^{(2)}_X}^{-1} \grad_{\mathrm{E}} \bar{f}(X) + {G^{(2)}_X}^{-1}\Hess_{\mathrm{E}} \bar{f}(X)[\xi] \notag\\
&\qquad \qquad - \rho\xi J \sym(JX^{\top}\grad_{\mathrm{E}} \bar{f}(X))).\label{eq:Hess_summary2}
\end{align}
\end{proposition}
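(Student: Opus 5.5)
The plan is to start from the general Hessian expression derived just before the statement:
\begin{align}
\Hess f(X)[\xi] ={}& P^{G}_X\big(-G_X^{-1}\D G(X)[\xi]G_X^{-1}\grad_{\mathrm E}\bar f(X) + G_X^{-1}\Hess_{\mathrm E}\bar f(X)[\xi]\\
&\quad - \xi J\sym(X^{\top}AG_X^{-1}\grad_{\mathrm E}\bar f(X)) + \Gamma_X(\xi,\bar V(X))\big).
\end{align}
I would specialize it to $G = G^{(i)}$ for $i=1,2$. Two observations are needed. First, $\bar V(X) = \grad^{G^{(i)}} f(X) = \eta^{(i)}$ is a tangent vector at $X$. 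Second, $P^{G^{(i)}}_X$ is linear, so we may split
\[
P^{G^{(i)}}_X(\cdots + \Gamma^{(i)}_X(\xi,\eta^{(i)})) = P^{G^{(i)}}_X(\cdots) + P^{G^{(i)}}_X(\Gamma^{(i)}_X(\xi,\eta^{(i)})).
\]
Because $\xi$ and $\eta^{(i)}$ are both tangent, Lemma~\ref{lem:PGamma} applies with $\eta = \eta^{(i)}$. It yields the first line of \eqref{eq:Hess_summary1} (resp.\ \eqref{eq:Hess_summary2}), namely the explicit term $A^{-1}\Pi_X(\cdots)$ (resp.\ $A^{-1}S_X(\cdots)$). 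It also yields a projected term $P^{G^{(i)}}_X(X(\cdots))$, which we merge with the remaining projected terms using linearity of $P^{G^{(i)}}_X$ once more.

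The one real simplification is the third term. Here I would invoke the last identity of Proposition~\ref{prop:G_summary} with $K = \grad_{\mathrm E}\bar f(X)$, which gives
\[
X^{\top}A{G^{(i)}_X}^{-1}\grad_{\mathrm E}\bar f(X) = \rho JX^{\top}\grad_{\mathrm E}\bar f(X).
\]
Hence $\xi J\sym(X^{\top}AG_X^{-1}\grad_{\mathrm E}\bar f(X)) = \rho\,\xi J\sym(JX^{\top}\grad_{\mathrm E}\bar f(X))$, since the scalar $\rho$ comes out of $\sym$. This is exactly the last term inside the projection in both formulas.

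The first two terms are kept as they are, with $\D G^{(i)}(X)[\xi]$ given by the derivative formulas computed earlier. Collecting everything inside one application of $P^{G^{(i)}}_X$ produces \eqref{eq:Hess_summary1} and \eqref{eq:Hess_summary2}.

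The only point needing care is bookkeeping. The term $2X(\cdots)$ for $i=1$ and the term $X(\cdots)$ for $i=2$ must match the coefficients in Lemma~\ref{lem:PGamma}. The $D_{X,\xi,\eta}$ used here is the simplified form \eqref{eq:D_2}, which the earlier lemma shows equals the definition \eqref{eq:D}. No further obstacle is expected, since all the heavy lifting has already been done in Lemma~\ref{lem:PGamma} and Proposition~\ref{prop:G_summary}.
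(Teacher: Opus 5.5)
Your proposal is correct and takes the same route as the paper. You specialize the general projected expression $P^{G}_X\bigl(\D\bar V(X)[\xi]+\Gamma_X(\xi,\bar V(X))\bigr)$ to $G=G^{(i)}$, apply Lemma~\ref{lem:PGamma} with the tangent vector $\eta=\eta^{(i)}$, and use $X^{\top}A{G^{(i)}_X}^{-1}K=\rho JX^{\top}K$ from Proposition~\ref{prop:G_summary} to rewrite $\xi J\sym(X^{\top}A{G_X^{(i)}}^{-1}\grad_{\mathrm E}\bar f(X))$ as $\rho\,\xi J\sym(JX^{\top}\grad_{\mathrm E}\bar f(X))$, merging all projected terms by linearity of $P^{G^{(i)}}_X$.
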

}

Here, the first two lines in both formulas~\eqref{eq:Hess_summary1} and~\eqref{eq:Hess_summary2} stem from the projected Christoffel term $P^{G^{(i)}}_X(\Gamma^{(i)}_X(\xi,\eta))$. The subsequent lines have essentially the same structure in both formulas, although they still depend on the chosen metric through $G_X^{(i)}$, $\D G^{(i)}(X)$, and
$P_X^{G^{(i)}}$.

{
\section{Newton-type methods and trust-region implementation}
\label{sec:Newton}

We consider the optimization problem
\begin{equation}
\label{eq:opt}
\min_{X \in \iSt_{A,J}(p,n)} f(X)
\end{equation}
on the indefinite Stiefel manifold $\iSt_{A,J}(p,n)$, where $f \colon \iSt_{A,J}(p,n) \to \R$ is a smooth objective function to be minimized.
The formulas derived in Section~\ref{sec:second-order} allow us to evaluate the Riemannian gradient $\grad f(X)$ and Hessian $\Hess f(X)[\xi]$ for $\xi \in T_X\!\iSt_{A,J}(p,n)$ under both metrics $G_X^{(1)}$ and $G_X^{(2)}$.
In this section, we review how these quantities can be used in Riemannian Newton-type methods, i.e., the Riemannian Newton's method and Riemannian trust-region method.

\subsection{Riemannian Newton's method}
In Riemannian Newton's method~\cite{AbsMahSep2008,boumal2023intromanifolds,altmann2024riemannian} on $\iSt_{A,J}(p,n)$, at the current point $X_k \in \iSt_{A,J}(p,n)$, we solve Newton's equation 
\begin{equation}
\label{eq:Newton_equation}
\Hess f(X_k)[\xi] = -\grad f(X_k)
\end{equation}
for $\xi \in T_{X_k}\!\iSt_{A,J}(p,n)$.
Once an (approximate) solution $\xi_k$ of~\eqref{eq:Newton_equation} is obtained, the next point $X_{k+1} \in \iSt_{A,J}(p,n)$ is computed as $X_{k+1}=R_{X_k}(\xi_k)$, where $R \colon T\iSt_{A,J}(p,n) \to \iSt_{A,J}(p,n)$ is a retraction~\cite{AbsMahSep2008,boumal2023intromanifolds}, i.e., $R$ is a smooth map satisfying $R_X(0)=X$ and $\D R_X(0)=\id_{T_X\!\iSt_{A,J}(p,n)}$.

If $\Hess f(X_k)$ is positive definite with respect to the given Riemannian metric on $\iSt_{A,J}(p,n)$, then \eqref{eq:Newton_equation} can be solved by the linear conjugate gradient (CG) method in the tangent space.
More precisely, with $k$ being fixed, for an initial guess $\xi_0 \in T_{X_k}\!\iSt_{A,J}(p,n)$, we define the initial residual by $r_0 \coloneqq \Hess f(X_k)[\xi_0]+\grad f(X_k)$.
The linear CG iterates belong to the affine Krylov subspaces $\xi_0+\mathcal K_m(\Hess f(X_k),r_0)$, where
\begin{equation}
\label{eq:Krylov_subspace}
\mathcal K_m(\Hess f(X_k), r_0)
\coloneqq
\Span\{r_0, \Hess f(X_k)[r_0], \dots, \Hess f(X_k)^{m-1}[r_0]\}
\subset T_{X_k}\!\iSt_{A,J}(p,n) .
\end{equation}
Inexact Riemannian Newton's method using Krylov subspace techniques and preconditioning for the multivariate eigenvalue problem was studied in~\cite{zhang2010riemannian}.

It is important to distinguish this linear CG method from the nonlinear
Riemannian CG (RCG) method.
In this paper, the former is used only as a linear solver for Newton's equation~\eqref{eq:Newton_equation} in the fixed tangent space $T_{X_k}\!\iSt_{A,J}(p,n)$.
The latter is a first-order Riemannian optimization method for Problem~\eqref{eq:opt} that generates a sequence of points on the manifold.

The use of the linear CG method is appropriate locally when $\Hess f(X_k)$ is positive definite.
However, for a general smooth objective function, the positive definiteness of the Riemannian Hessian cannot be expected at arbitrary nonstationary points.
In particular, if a search direction $\xi \in T_{X_k}\!\iSt_{A,J}(p,n)$ generated by the linear CG method satisfies $\langle \xi,\Hess f(X_k)[\xi]\rangle_{X_k} \leq 0$, then the linear CG method is no longer appropriate to solve Newton's equation.
This motivates the trust-region method.
The algorithm of the linear CG method is a special case of the truncated CG method (Algorithm~\ref{alg:tCG_TR}), which is discussed in the next subsection.

\subsection{Trust-region model and truncated CG}

In this subsection, we review the Riemannian trust-region framework~\cite{absil2007trust,AbsMahSep2008}.
At $X_k \in \iSt_{A,J}(p,n)$, we define the quadratic model
\begin{equation}
\label{eq:TR_model}
m_k(\xi) \coloneqq f(X_k) + \langle \grad f(X_k),\xi\rangle_{X_k} + \frac{1}{2} \langle \xi,\Hess f(X_k)[\xi]\rangle_{X_k}
\end{equation}
for $\xi \in T_{X_k}\!\iSt_{A,J}(p,n)$.
Then, the trust-region subproblem at $X_k$ is
\begin{equation}
\label{eq:TR_subproblem}
\min_{\xi \in T_{X_k}\!\iSt_{A,J}(p,n)} m_k(\xi)
\quad
\text{subject to}
\quad
\|\xi\|_{X_k} \leq \Delta_k,
\end{equation}
where $\Delta_k>0$ is called the trust-region radius and
$\|\xi\|_{X_k}\coloneqq\sqrt{\langle \xi,\xi\rangle_{X_k}}$.
The subproblem~\eqref{eq:TR_subproblem} is solved approximately by the truncated CG method (Algorithm~\ref{alg:tCG_TR}) in $T_{X_k}\!\iSt_{A,J}(p,n)$.
Starting from $\xi_0=0$, the method applies the CG procedure to the model $m_k$, but terminates early if either a negative-curvature direction is detected or the trust-region boundary is reached.
For $\xi,\eta \in T_{X_k}\!\iSt_{A,J}(p,n)$ and $\Delta>0$, let $\tau(\xi,\eta,\Delta)$ denote the positive scalar $\tau$ satisfying $\|\xi+\tau \eta\|_{X_k}=\Delta$.
Specifically, for $a\coloneqq\langle \eta,\eta\rangle_{X_k}$ and $b\coloneqq\langle \xi,\eta\rangle_{X_k}$, we have
\begin{equation}
\label{eq:tau_boundary}
\tau(\xi,\eta,\Delta) = \frac{-b+\sqrt{b^2+a(\Delta^2-\|\xi\|_{X_k}^2)}}{a}.
\end{equation}
\begin{algorithm}[htb]
\caption{Truncated CG method for solving the trust-region subproblem~\eqref{eq:TR_subproblem}}
\label{alg:tCG_TR}
\begin{algorithmic}[1]
\Require $X_k\in\iSt_{A,J}(p,n)$, 
trust-region radius $\Delta_k>0$, tolerance $\varepsilon_{\rm cg}>0$.
\Ensure Approximate solution $\xi_k\in T_{X_k}\!\iSt_{A,J}(p,n)$ of~\eqref{eq:TR_subproblem}.
\State $\xi_0 \coloneqq 0$, $r_0 \coloneqq \grad f(X_k)$, $\eta_0\coloneqq -r_0$.
\For{$l=0,1,2,\ldots$}
    \If{$\|r_l\|_{X_k}< \varepsilon_{\rm cg}$}
        \State \Return $\xi_l$.
    \EndIf
    \State $h_l \coloneqq \Hess f(X_k)[\eta_l]$.
    \State $\kappa_l \coloneqq \langle \eta_l,h_l\rangle_{X_k}$.
    \If{$\kappa_l \leq 0$}
        \State $\tau \coloneqq \tau(\xi_l,\eta_l,\Delta_k)$ (see~\eqref{eq:tau_boundary}).
        \State \Return $\xi_l+\tau \eta_l$.
    \EndIf
    \State $\alpha_l \coloneqq
    \dfrac{\langle r_l,r_l\rangle_{X_k}}{\kappa_l}$.
    \If{$\|\xi_l+\alpha_l \eta_l\|_{X_k}\ge \Delta_k$}
        \State $\tau \coloneqq \tau(\xi_l,\eta_l,\Delta_k)$ (see~\eqref{eq:tau_boundary}).
        \State \Return $\xi_l+\tau \eta_l$.
    \EndIf
    \State $\xi_{l+1}\coloneqq \xi_l+\alpha_l \eta_l$.
    \State $r_{l+1}\coloneqq r_l+\alpha_l h_l$.
    \State $\beta_{l+1}\coloneqq
    \dfrac{\langle r_{l+1},r_{l+1}\rangle_{X_k}}
    {\langle r_l,r_l\rangle_{X_k}}$.
    \State $\eta_{l+1}\coloneqq -r_{l+1}+\beta_{l+1}\eta_l$.
\EndFor
\end{algorithmic}
\end{algorithm}
When $\Hess f(X_k)$ is positive definite and the trust-region boundary is not reached, Algorithm~\ref{alg:tCG_TR} reduces to the ordinary linear CG method for Newton's equation.

Given an approximate solution $\xi_k$ of~\eqref{eq:TR_subproblem}, we compute the trial point $X_k^+ \coloneqq R_{X_k}(\xi_k)$ by using a retraction $R$ on $\iSt_{A,J}(p,n)$.
The quality of this trial step is evaluated by the ratio
\begin{equation}
\label{eq:TR_ratio}
\varrho_k
=
\frac{f(X_k)-f(X_k^+)}
{m_k(0)-m_k(\xi_k)}
=
\frac{f(X_k)-f(R_{X_k}(\xi_k))}
{-\langle \grad f(X_k),\xi_k\rangle_{X_k}
-\frac{1}{2}\langle \xi_k,\Hess f(X_k)[\xi_k]\rangle_{X_k}}.
\end{equation}
If $\varrho_k$ is sufficiently large, the trial point is accepted; otherwise it is rejected.
The trust-region radius is decreased when $\varrho_k$ is small and increased when the model is sufficiently accurate and the computed step reaches the trust-region boundary.

The Riemannian trust-region method is described in Algorithm~\ref{alg:RTR}.
\begin{algorithm}[htb]
\caption{Riemannian trust-region method with truncated CG method}
\label{alg:RTR}
\begin{algorithmic}[1]
\Require Retraction $R$ on $\iSt_{A,J}(p,n)$, initial point $X_0\in\iSt_{A,J}(p,n)$, initial trust-region radius $\Delta_0>0$,
maximum radius $\Delta_{\max}>0$, 
$\gamma_{\rm dec}, \gamma_{\rm inc}, \varrho_{\rm acc}, \varrho_{\rm inc}$ satisfying
$0<\gamma_{\rm dec}<1<\gamma_{\rm inc}$ and $0<\varrho_{\rm acc}<\varrho_{\rm inc}<1$, and stopping tolerance $\varepsilon>0$.
\For{$k=0,1,2,\ldots$}
\If{$\|\grad f(X_k)\|_{X_k}<\varepsilon$}
    \State \Return $X_k$.
\EndIf
\State Approximately solve~\eqref{eq:TR_subproblem} by
    Algorithm~\ref{alg:tCG_TR} and obtain $\xi_k$.
    \State Compute $\varrho_k$ by~\eqref{eq:TR_ratio}.
    \If{$\varrho_k\ge \varrho_{\rm acc}$}
        \State $X_{k+1}\coloneqq R_{X_k}(\xi_k)$.
    \Else
        \State $X_{k+1}\coloneqq X_k$.
    \EndIf
    \If{$\varrho_k<\varrho_{\rm acc}$}
        \State $\Delta_{k+1}\coloneqq \gamma_{\rm dec}\Delta_k$.
    \ElsIf{$\varrho_k>\varrho_{\rm inc}$ and
    $\|\xi_k\|_{X_k}=\Delta_k$}
        \State $\Delta_{k+1}\coloneqq
        \min\{\gamma_{\rm inc}\Delta_k,\Delta_{\max}\}$.
    \Else
        \State $\Delta_{k+1}\coloneqq \Delta_k$.
    \EndIf
\EndFor
\end{algorithmic}
\end{algorithm}
The trust-region framework is particularly important in the present setting since the Hessian is not necessarily positive definite before convergence.
The truncated CG method in Algorithm~\ref{alg:tCG_TR} explicitly handles negative curvature and trust-region boundary events.
In the numerical experiments in Section~\ref{sec:NumericalExperiments}, we use a Riemannian trust-region implementation based on this framework and the formula for the Hessian derived in Section~\ref{sec:second-order}.
We also compare it with the RCG method.
}

{
\section{Numerical experiments}
\label{sec:NumericalExperiments}

In this section, we demonstrate several numerical experiments to observe the numerical behavior of the second-order optimization methods discussed in Section~\ref{sec:Newton}.
The experiments are designed to address the following points.
First, we compare a first-order method with a second-order trust-region method
using the Hessian formulas derived in Section~\ref{sec:second-order}.
Second, we examine the behavior for several problem sizes.
Third, we investigate the influence of eigenvalues of $A$ approaching $0$.
Finally, we numerically examine the spectrum of the Riemannian Hessian at
stationary points.

We consider the trace minimization problem on $\iSt_{A,J}(p,n)$, i.e., Problem~\eqref{eq:opt} with a specific objective function $f(X) \coloneqq \tr(X^{\top}MX)$ for $X \in \iSt_{A,J}(p,n)$, where $A \in \Sym(n)$ is invertible, $J \coloneqq \diag(I_{p_+}, -I_{p_-}) \in \Sym(p)$ ($p = p_+ + p_-$), and $M \in \Sym_{++}(n)$.
This optimization problem on the indefinite Stiefel manifold is closely related to the generalized eigenvalue problem to find eigenvalues $\lambda \in \R$ and the associated eigenvectors $v \neq 0$ satisfying
\begin{equation}
Mv = \lambda Av,
\end{equation}
especially when we aim to compute $p_+$ positive eigenvalues and $p_-$ negative eigenvalues, ordered by increasing absolute value~\cite{van2025generalized}.
We define the smooth extension $\bar{f}$ of $f$ to $\E$ as $\bar{f}(X) \coloneqq \tr(X^{\top} M X)$.
Then, we have
\begin{equation}
\grad_{\mathrm{E}} \bar{f}(X) = 2MX,
\qquad
\Hess_{\mathrm{E}} \bar{f}(X)[\xi] = 2M\xi.
\end{equation}
The Riemannian gradient and Hessian are then computed by the
formulas in Sections~\ref{sec:Review} and~\ref{sec:second-order}.

All experiments were implemented using Manopt 8.0~\cite{boumal2014manopt} and were performed in double-precision floating-point arithmetic on a computer (Apple M1 Max, 64 GB RAM) equipped with MATLAB R2024a.
The nonlinear Riemannian conjugate gradient method is denoted by RCG, and
the Riemannian trust-region method is denoted by RTR.
The RTR method uses the Riemannian Hessian-vector products derived in
Section~\ref{sec:second-order}, and the trust-region subproblems are solved by the truncated CG method.

Unless otherwise stated, the stopping criterion is $\|\grad f(X_k)\|_{X_k} < 10^{-6}$.
The maximum elapsed time was set to $600$ seconds for each solver run.
For each problem size and run, the same matrices $A$ and $M$ and the same initial point $X_0$ were used for both metrics.
For each fixed problem instance and metric, the RCG and RTR methods were also initialized from the same point.
For the RCG method, we used Manopt's default nonlinear conjugate-gradient update and default line-search routine, with the minimum step size set to $10^{-14}$.
In the RTR method, we used the initial trust-region radius $\Delta_0=10^{-2}$ and the maximum trust-region radius $\Delta_{\max}=1$.
For the construction of test instances, we set $J=\diag(I_{p_+},-I_{p_-})$, $p_+=\lceil p/2\rceil$, and $p_-=p-p_+$.
For the default experiments, the eigenvalues of $A$ were chosen in the intervals $[1,5]$ and $[-5,-1]$, with approximately the same number of positive and negative eigenvalues.
The matrix $M$ in the objective function $f$ was generated as $M=Q_M\diag(\mu_1,\mu_2,\ldots,\mu_n)Q_M^{\top}$, where $Q_M$ is a random orthogonal matrix and the eigenvalues $\mu_i$ are logarithmically spaced so that the condition number of $M$ is $10^2$.
Unless otherwise stated, each experimental setting was repeated $20$ times.
In Tables~\ref{tab:scalability} and~\ref{tab:near_singularity}, the column ``succ.'' reports the number of runs that satisfied the stopping criterion.
Iteration counts and elapsed times are reported as mean $\pm$ standard deviation over successful runs only.
Terminal Riemannian gradient norms are reported as the form of median [first quartile, third quartile] over all runs.
A dash indicates that no run was successful.
Since all runs in the large-scale experiment were successful, the results in Table~\ref{tab:large_scale} are reported as mean $\pm$ standard deviation over all runs.

In the numerical experiments, we used the quasi-geodesic retraction proposed in~\cite{van2025generalized}.
For $\xi\in T_X\iSt_{A,J}(p,n)$, we define $\Omega \coloneqq X^{\top}A\xi$ and $K \coloneqq \xi^{\top}A\xi$.
Since $\xi$ is tangent at $X$, the matrix $\Omega$ is skew-symmetric.
Then, the quasi-geodesic retraction $R$ is given by
\begin{equation}
\label{eq:qg_retr}
R_X(\xi)
=
[X,\xi]\exp\Bigg(\begin{bmatrix}
J\Omega & -JK \\
I_p & J\Omega
\end{bmatrix}\Bigg)
\begin{bmatrix}
I_p\\
0
\end{bmatrix}
\exp(-J\Omega).
\end{equation}
Although this retraction involves matrix exponentials, the required
exponentials are only of sizes $2p\times 2p$ and $p\times p$, not
$n\times n$.
Thus, the exponential part is moderate when $p\ll n$.

\subsection{Scalability and robustness}

We first compared the RCG and RTR methods.
The problem sizes are $(n,p)=(10,4),(50,8),(100,10)$, and we set $\rho=1$.
For each size and for each metric $G_X^{(1)}$ and $G_X^{(2)}$, we performed $20$ random runs.
The maximum number of outer iterations is $1000$ for both methods.
The results are summarized in Table~\ref{tab:scalability}.

\begin{table*}[htb]
\centering
\small
\setlength{\tabcolsep}{4pt}
\caption{Scalability experiment for the trace minimization problem.
The stopping tolerance is $10^{-6}$.
The success column reports the number of successful runs out of $20$.
Iteration counts and elapsed times are reported as mean $\pm$ standard deviation over successful runs only.
Terminal gradient norms are reported as median [first quartile, third quartile] over all runs.}
\label{tab:scalability}
{\scriptsize
\begin{tabular}{ccccccc}
\hline
$(n,p)$ & metric & method & succ. & iter. & time [s] & terminal grad. \\
\hline
$(10,4)$ & $G_X^{(1)}$ & RCG
& $0/20$ & -- & --
& $1.197{\times}10^{-1}\,[9.081{\times}10^{-3},\,5.930{\times}10^{-1}]$ \\
$(10,4)$ & $G_X^{(1)}$ & RTR
& $18/20$ & $18.33 \pm 1.85$ & $0.110 \pm 0.101$
& $2.135{\times}10^{-8}\,[1.520{\times}10^{-9},\,3.162{\times}10^{-7}]$ \\
$(10,4)$ & $G_X^{(2)}$ & RCG
& $1/20$ & $90.00$ & $0.036$
& $5.457{\times}10^{-2}\,[6.356{\times}10^{-5},\,3.563{\times}10^{-1}]$ \\
$(10,4)$ & $G_X^{(2)}$ & RTR
& $20/20$ & $17.60 \pm 1.27$ & $0.072 \pm 0.030$
& $2.644{\times}10^{-8}\,[1.977{\times}10^{-9},\,1.323{\times}10^{-7}]$ \\
\hline
$(50,8)$ & $G_X^{(1)}$ & RCG
& $0/20$ & -- & --
& $2.213{\times}10^{-1}\,[5.022{\times}10^{-2},\,5.621{\times}10^{-1}]$ \\
$(50,8)$ & $G_X^{(1)}$ & RTR
& $19/20$ & $25.95 \pm 1.54$ & $0.515 \pm 0.222$
& $6.194{\times}10^{-9}\,[1.449{\times}10^{-10},\,1.316{\times}10^{-7}]$ \\
$(50,8)$ & $G_X^{(2)}$ & RCG
& $0/20$ & -- & --
& $1.791{\times}10^{-4}\,[4.686{\times}10^{-5},\,3.196{\times}10^{-1}]$ \\
$(50,8)$ & $G_X^{(2)}$ & RTR
& $20/20$ & $23.50 \pm 2.09$ & $0.319 \pm 0.178$
& $1.919{\times}10^{-8}\,[4.150{\times}10^{-11},\,7.531{\times}10^{-8}]$ \\
\hline
$(100,10)$ & $G_X^{(1)}$ & RCG
& $0/20$ & -- & --
& $1.478{\times}10^{-1}\,[6.073{\times}10^{-2},\,3.423{\times}10^{-1}]$ \\
$(100,10)$ & $G_X^{(1)}$ & RTR
& $15/20$ & $37.20 \pm 20.29$ & $2.288 \pm 1.616$
& $6.799{\times}10^{-8}\,[7.361{\times}10^{-9},\,1.015{\times}10^{-6}]$ \\
$(100,10)$ & $G_X^{(2)}$ & RCG
& $0/20$ & -- & --
& $9.722{\times}10^{-4}\,[1.099{\times}10^{-4},\,2.588{\times}10^{-1}]$ \\
$(100,10)$ & $G_X^{(2)}$ & RTR
& $20/20$ & $24.65 \pm 3.42$ & $0.959 \pm 0.681$
& $1.022{\times}10^{-8}\,[3.051{\times}10^{-11},\,2.193{\times}10^{-8}]$ \\
\hline
\end{tabular}
}
\end{table*}

The RCG method reached the prescribed tolerance in only one of the $120$ runs across all problem sizes and metrics.
Accordingly, iteration counts and elapsed times to successful termination were not available for most  instances.
Nevertheless, the terminal gradient norms show that the RCG method often made more progress under $G_X^{(2)}$ than under $G_X^{(1)}$.
The RTR method was considerably more robust.
Under $G_X^{(2)}$, all $60$ runs were successful.
Under $G_X^{(1)}$, $52$ of the $60$ runs were successful.
The observed success counts and terminal gradient norms indicate that RTR with $G_X^{(2)}$ was more robust than with $G_X^{(1)}$ for the tested problem instances.

\subsection{Large-scale stress test}

Next, we examined a larger problem with $(n,p)=(500,25)$.
In this large-scale stress test, we focused on $G_X^{(2)}$, which exhibited the most robust behavior in Table~\ref{tab:scalability}.
In this experiment, we used the RTR method only and set the stopping tolerance to $10^{-5}$.
The results over $20$ random runs are shown in Table~\ref{tab:large_scale}.

\begin{table}[htb]
\centering
\small
\caption{Large-scale stress test with $(n,p)=(500,25)$ using $G_X^{(2)}$.
The stopping tolerance is $10^{-5}$.
All values except the number of successful runs are reported as
mean $\pm$ standard deviation over $20$ runs.}
\label{tab:large_scale}
\begin{tabular}{ccccc}
\hline
metric & succ. & iter. & time [s] & final grad. \\
\hline
$G_X^{(2)}$
& $20/20$
& $38.15 \pm 3.90$
& $84.91 \pm 25.33$
& $6.553{\times}10^{-6} \pm 2.720{\times}10^{-6}$ \\
\hline
\end{tabular}
\end{table}
This experiment shows that the RTR method with $G_X^{(2)}$ can also be applied to problems with $n=500$ and $p=25$.
All $20$ runs reached the prescribed tolerance, which further supports the robustness of the metric with $G_X^{(2)}$ observed in the previous subsection.

\subsection{Near-singularity study}
We next studied the influence of eigenvalues of $A$ approaching zero.
In this experiment, we set $(n,p)=(50,8)$.
Since the theory in this paper assumes that $A$ is nonsingular, we used a zero-free grid and imposed a small positive lower bound on the absolute values of the eigenvalues in floating point arithmetic.
Let $\Xi$ be the following zero-free grid:
\begin{equation}
\Xi
=
\left\{
-1,-\frac{4}{5},-\frac{3}{5},-\frac{2}{5},-\frac{1}{5},
\frac{1}{5},\frac{2}{5},\frac{3}{5},\frac{4}{5},1
\right\}.
\end{equation}
For each fixed $q\in\{1,3,5,11,21\}$ and each $\xi\in\Xi$, we defined $\lambda(\xi,q)\coloneqq\sgn(\xi)\max\{5|\xi|^q,10^{-8}\}$.
For each fixed $q$, each of the ten values in $\{\lambda(\xi,q)\mid\xi\in\Xi\}$ was assigned multiplicity five to form the $50$ eigenvalues of $A$.
For each run, we generated a random orthogonal matrix $Q_A$ and set $A=Q_A\diag(\lambda_A)Q_A^\top$.
The five choices of $q$ define five separate experimental settings and are not combined into a single spectrum.
The numerical lower bound $10^{-8}$ affects only the value corresponding to $|\xi|=1/5$ and $q=21$.
In particular, no two distinct grid magnitudes are mapped to the same positive eigenvalue by the lower-bound operation.
The purpose of this experiment is to study the effect of eigenvalues of $A$ approaching zero rather than the separation between distinct eigenvalues of $A$.
For each fixed $q$ and run, the same matrices $A$ and $M$ and the same initial point were used for both metrics.
We set $\rho=1$ and used the RTR method for both metrics.
For each $q$ and each metric, we performed $20$ runs.
The maximum number of outer iterations was set to $500$.
The results are shown in Table~\ref{tab:near_singularity}.

\begin{table*}[htb]
\centering
\small
\setlength{\tabcolsep}{5pt}
\caption{Near-singularity study with $(n,p)=(50,8)$.
The stopping tolerance is $10^{-6}$.
The success column reports the number of successful runs out of $20$.
Iteration counts and elapsed times are reported as mean $\pm$ standard deviation over successful runs only.
Terminal gradient norms are reported as median [first quartile, third quartile] over all runs.}
\label{tab:near_singularity}
\begin{tabular}{cccccc}
\hline
metric & $q$ & succ. & iter. & time & terminal grad. \\
\hline
$G_X^{(1)}$ & $1$
& $19/20$
& $26.42 \pm 2.36$
& $0.996 \pm 1.802$
& $2.208{\times}10^{-8}\,[1.336{\times}10^{-9},\,3.345{\times}10^{-7}]$ \\
$G_X^{(1)}$ & $3$
& $19/20$
& $288.11 \pm 89.97$
& $5.414 \pm 1.576$
& $2.665{\times}10^{-7}\,[1.315{\times}10^{-7},\,4.518{\times}10^{-7}]$ \\
$G_X^{(1)}$ & $5$
& $0/20$
& --
& --
& $9.489{\times}10^{2}\,[1.326{\times}10^{2},\,1.424{\times}10^{3}]$ \\
$G_X^{(1)}$ & $11$
& $0/20$
& --
& --
& $6.052{\times}10^{6}\,[1.175{\times}10^{6},\,1.838{\times}10^{7}]$ \\
$G_X^{(1)}$ & $21$
& $0/20$
& --
& --
& $1.164{\times}10^{7}\,[2.847{\times}10^{6},\,8.332{\times}10^{7}]$ \\
\hline
$G_X^{(2)}$ & $1$
& $20/20$
& $21.80 \pm 1.64$
& $0.306 \pm 0.260$
& $2.185{\times}10^{-8}\,[3.898{\times}10^{-10},\,3.744{\times}10^{-7}]$ \\
$G_X^{(2)}$ & $3$
& $20/20$
& $20.45 \pm 1.47$
& $0.251 \pm 0.105$
& $2.669{\times}10^{-8}\,[3.711{\times}10^{-11},\,1.126{\times}10^{-7}]$ \\
$G_X^{(2)}$ & $5$
& $20/20$
& $19.40 \pm 1.70$
& $0.243 \pm 0.133$
& $2.495{\times}10^{-8}\,[4.436{\times}10^{-9},\,1.100{\times}10^{-7}]$ \\
$G_X^{(2)}$ & $11$
& $20/20$
& $19.85 \pm 1.50$
& $0.241 \pm 0.243$
& $6.374{\times}10^{-9}\,[1.271{\times}10^{-10},\,9.352{\times}10^{-8}]$ \\
$G_X^{(2)}$ & $21$
& $20/20$
& $21.50 \pm 2.31$
& $0.183 \pm 0.090$
& $6.697{\times}10^{-9}\,[1.362{\times}10^{-10},\,5.135{\times}10^{-8}]$ \\
\hline
\end{tabular}
\end{table*}

For $G_X^{(1)}$, $19$ of the $20$ runs were successful for both $q=1$ and $q=3$.
No run was successful for $q=5$, $q=11$, or $q=21$.
In contrast, all $20$ runs were successful under $G_X^{(2)}$ for every tested value of $q$.
The terminal gradient norms for $G_X^{(1)}$ increased markedly for $q\geq5$.
By contrast, the success counts, iteration counts, and terminal gradient norms for $G_X^{(2)}$ remained stable over all tested values of $q$.
These paired results indicate that, for the present trace minimization problem, the RTR method with $G_X^{(2)}$ was substantially more robust as the smallest absolute eigenvalues of $A$ approached zero.
However, note that this numerical observation does not imply a uniform robustness result for arbitrary objective functions.

\subsection{Condition number of the Hessian}
Finally, we numerically compared the condition number of the Riemannian Hessian under the two metrics at common stationary points.
Before proceeding to the numerical experiment, we note that some properties of the eigenvalues of the Hessian can be understood theoretically.
At a critical point, the bilinear form associated with the Riemannian Hessian is independent of the chosen Riemannian metric because the first derivative of the objective function vanishes.
Consequently, the Riemannian Hessian operators associated with different metrics represent the same symmetric bilinear form, and their numbers of negative, zero, and positive eigenvalues coincide by Sylvester's law of inertia.
The trace minimization problem also has a certain symmetry.
Let $Q=\diag(Q_+,Q_-)$ with $Q_+\in\mathrm O(p_+)$ and $Q_-\in\mathrm O(p_-)$.
Then, it follows from $J = \diag(I_{p_+}, -I_{p_-})$ that $Q^\top JQ=J$, $(XQ)^\top A(XQ)=J$, and $f(XQ)=f(X)$.
Therefore, at a stationary point, the tangent directions generated by this right action belong to the kernel of the Riemannian Hessian.
In the present experiment, we set $p_+=p_-=2$, and hence the dimension of the symmetry group is $\dim\mathrm O(2)+\dim\mathrm O(2)=2$.
Thus, the Hessian has a zero eigenvalue with a multiplicity of at least two.

In the following experiment, we numerically checked whether additional degeneracy occurred and compared the positive eigenvalues, which determine the conditioning of the Hessian away from the symmetry directions.
For each of 100 random problem instances, we first computed a high-accuracy numerical stationary point $X_\star$ by the RTR method with $G_X^{(2)}$ and the stopping tolerance $10^{-10}$.
We then evaluated the Riemannian Hessian operators associated with $G_X^{(1)}$ and $G_X^{(2)}$ at the same point $X_\star$.

Specifically, we computed a basis $B_1,\ldots,B_d$ of $T_{X_\star}\iSt_{A,J}(p,n)$ from the null space of the linearized constraint $\xi^\top AX_\star+X_\star^\top A\xi=0$, where $d=\dim \iSt_{A,J}(p,n) = np-p(p+1)/2$.
The null space was computed by the singular value decomposition.
For each metric $G_X^{(i)}$, $i = 1, 2$, we formed the Gram matrix $C^{(i)}$ whose $(\alpha,\beta)$ element is $\langle B_{\alpha},B_{\beta}\rangle_{X_\star}^{(i)} = \tr(B_{\alpha}^{\top}G_{X_{\star}}^{(i)}B_{\beta})$.
We used its Cholesky decomposition to obtain a basis $E_1^{(i)},\ldots,E_d^{(i)}$ that was orthonormal with respect to the corresponding Riemannian metric.
Using this basis, we formed the matrix representation of the Riemannian Hessian by $H_{\alpha\beta}^{(i)}
=
\langle E_\alpha^{(i)},\Hess^{(i)}f(X_\star)[E_\beta^{(i)}]\rangle_{X_\star}^{(i)}$.

An eigenvalue $\lambda_j$ was classified as numerically zero when $|\lambda_j|\leq10^{-8}\max\{1,\max_k|\lambda_k|\}$.
For both metrics and in all $100$ runs, the numbers of negative, zero, and positive eigenvalues were $0$, $2$, and $28$, respectively.
Thus, the two symmetry-induced zero modes were the only numerically detected degeneracies.
We compared the effective spectral condition number~\cite{NabbenVuik2004}, which is defined as
$\kappa_{\mathrm{eff}}
=
\lambda_{\max}^{+}/\lambda_{\min}^{+}$, where $\lambda_{\max}^{+}$ and $\lambda_{\min}^{+}$ denote the largest and smallest positive eigenvalues, respectively.
We denote the effective spectral condition number associated with the metric $G_X^{(i)}$ by $\kappa_{\mathrm{eff}}^{(i)}$.
The maximum final Riemannian gradient norm over all computed stationary points and both metric evaluations was less than $6.9\times10^{-11}$.

\begin{table}[htb]
\centering
\small
\caption{Effective spectral condition numbers of the Riemannian Hessian at common numerical stationary points over $100$ paired runs.
The condition numbers and paired ratios are reported as median [first quartile, third quartile].}
\label{tab:hessian_conditioning}
\begin{tabular}{cc}
\hline
quantity & value \\
\hline
$\kappa_{\mathrm{eff}}^{(1)}$
& $104.20\,[66.04,\,174.81]$ \\
$\kappa_{\mathrm{eff}}^{(2)}$
& $80.22\,[68.56,\,95.84]$ \\
$\kappa_{\mathrm{eff}}^{(2)}/\kappa_{\mathrm{eff}}^{(1)}$
& $0.702\,[0.477,\,1.172]$ \\
$\#\{\kappa_{\mathrm{eff}}^{(2)}<\kappa_{\mathrm{eff}}^{(1)}\}$
& $69/100$ \\
\hline
\end{tabular}
\end{table}
The results are shown in Table~\ref{tab:hessian_conditioning}.
The median effective spectral condition number was smaller for $G_X^{(2)}$ than for $G_X^{(1)}$.
The median paired ratio $\kappa_{\mathrm{eff}}^{(2)}/\kappa_{\mathrm{eff}}^{(1)}$ was $0.702$.
Moreover, $G_X^{(2)}$ produced the smaller effective spectral condition number in $69$ of the $100$ paired comparisons.
The interquartile range of the effective spectral condition numbers was also narrower for $G_X^{(2)}$.
These results indicate that $G_X^{(2)}$ more frequently produced a better-conditioned Riemannian Hessian in this experiment.
However, $G_X^{(1)}$ produced the smaller effective spectral condition number in the remaining $31$ comparisons.
Thus, we note that the present experiment does not establish a uniform ordering between the two metrics for all problem instances or objective functions.
}

\section{Concluding remarks}
\label{sec:conclusion}
In this paper, we investigated the second-order geometry of the indefinite Stiefel manifold.
In particular, with respect to the two types of Riemannian metrics on the manifold proposed in~\cite{van2025generalized}, the Levi-Civita connection was derived for each case.
This enables us to compute the Riemannian Hessian of a smooth function defined on the manifold.
Furthermore, when considering an optimization problem on the indefinite Stiefel manifold, thanks to the Riemannian Hessian of the objective function, we can implement Newton-type second-order methods.
In the procedure of Riemannian Newton's method, Newton's equation, which is a linear equation in a tangent space defined by using the Hessian of the objective function, should be solved.
We discussed solving Newton's equation by the linear CG method on the tangent space when the Hessian is positive definite.
However, since the Hessian may be indefinite away from a solution, we also considered a Riemannian trust-region framework with the truncated CG method.
Finally, we demonstrated numerical experiments on trace minimization problems on the indefinite Stiefel manifold.
The results showed that the trust-region implementation using the derived Hessian is more robust than the nonlinear Riemannian CG method, especially for the metric $G_X^{(2)}$.
The near-singularity experiments and Hessian spectrum computations also indicated favorable numerical behavior of $G_X^{(2)}$ in the tested problems.

\section*{Acknowledgments}
Funding: This work was partly supported by JSPS KAKENHI Grant Numbers JP25K07125, JP25K03082, and JP24K14985.

\bibliographystyle{unsrt}

\bibliography{sato_bib}
\end{document}